\documentclass[10pt,reqno]{amsart}
\textheight=200 mm
\textwidth=170 mm
\addtolength{\oddsidemargin}{-20mm}
\addtolength{\evensidemargin}{-20mm}
   \usepackage[centertags]{amsmath}
   \usepackage{amsfonts}
   \usepackage{amsmath}
   \usepackage{amssymb}
   \usepackage{amsthm}
   \usepackage{newlfont}
   \usepackage[latin1]{inputenc}

\usepackage{multirow, bigdelim}

\usepackage{epsfig}
\usepackage{pst-all}
\usepackage{pstricks}
\usepackage{pgf}
\usepackage{mathrsfs}
\usepackage{hyperref}
\usepackage{bm}

\usepackage{graphicx}

\usepackage{bm}

\usepackage{mathtools}
\usepackage{ifmtarg}

\makeatletter

    \newcommand\contFrac{\@ifstar{\@contFracStar}{\@contFracNoStar}}

    \def\singleContFrac#1#2{%
        \begin{array}{@{}c@{}}%
            \multicolumn{1}{c|}{#1}%
            \\%
            \hline%
            \multicolumn{1}{|c}{#2}%
        \end{array}%
    }

    \def\@contFracNoStar#1{%
        \mathchoice{
            \@contFracNoStarDisplay@#1//\@nil%
        }{
            \@contFracNoStarInline@#1//\@nil%
        }{
            \@contFracNoStarInline@#1//\@nil%
        }{
            \@contFracNoStarInline@#1//\@nil%
        }%
    }

    \def\@contFracNoStarDisplay@#1//#2\@nil{%
        \@ifmtarg{#2}{%
            #1%
        }{%
            #1+\cfrac{1}{\@contFracNoStarDisplay@#2\@nil}%
        }%
    }

        \def\@contFracNoStarInline@#1//#2\@nil{%
            \@ifmtarg{#2}{%
                #1%
            }{%
                #1 \@@contFracNoStarInline@@#2\@nil%
            }%
        }
        \def\@@contFracNoStarInline@@#1//#2\@nil{%
            \@ifmtarg{#2}{%
                + \singleContFrac{1}{#1}%
            }{%
                + \singleContFrac{1}{#1} \@@contFracNoStarInline@@#2\@nil%
            }%
        }

    \def\@contFracStar#1{%
        \mathchoice{
            \@contFracStarDisplay@#1////\@nil%
        }{
            \@contFracStarInline@#1//\@nil%
        }{
            \@contFracStarInline@#1//\@nil%
        }{
            \@contFracStarInline@#1//\@nil%
        }%
    }

    \def\@contFracStarDisplay@#1//#2//#3\@nil{%
        \@ifmtarg{#2}{%
            #1%
        }{%
            #1 + \cfrac{#2}{\@contFracStarDisplay@#3\@nil}%
        }%
    }

        \def\@contFracStarInline@#1//#2\@nil{%
            \@ifmtarg{#2}{%
                #1%
            }{%
                #1 \@@contFracStarInline@@#2\@nil%
            }%
        }
        \def\@@contFracStarInline@@#1//#2//#3\@nil{%
            \@ifmtarg{#3}{%
                - \singleContFrac{#1}{#2}%
            }{%
                - \singleContFrac{#1}{#2} \@@contFracStarInline@@#3\@nil%
            }%
        }
\makeatother

\def\cFrac#1#2{%
\begin{array}{@{}c@{}}\multicolumn{1}{c|}{#1}\\%
\hline\multicolumn{1}{|c}{#2}\end{array}}



\allowdisplaybreaks[3]

\theoremstyle{plain}
\newtheorem{theorem}{Theorem}[section]
\newtheorem{lemma}[theorem]{Lemma}
\newtheorem{proposition}[theorem]{Proposition}

\theoremstyle{definition}

\theoremstyle{remark}
\newtheorem{remark}[theorem]{Remark}
\newtheorem*{remark*}{Remark}

\numberwithin{equation}{section}

\newcommand\D{\displaystyle}

\newcommand\ZZ{{\mathbb Z}}

   \parindent 10pt 

\title[Stochastic Darboux for random walks on $\mathbb{Z}$]{The spectral matrices associated with the stochastic Darboux transformations of random walks on the integers}

\author{Manuel D. de la Iglesia}
\address{Manuel D. de la Iglesia\\
Instituto de Matem\'aticas, Universidad Nacional Aut\'onoma de M\'exico, Circuito Exterior, C.U., 04510, Ciudad de M\'exico, M\'exico.}
\email{mdi29@im.unam.mx}

\author{Claudia Juarez}
\address{Claudia Juarez\\
Instituto de Matem\'aticas, Universidad Nacional Aut\'onoma de M\'exico, Circuito Exterior, C.U., 04510, Ciudad de M\'exico, M\'exico.}
\email{ClaudiaJrz@ciencias.unam.mx}
\date{\today}
\thanks{
}

\thanks{This work was partially supported by PAPIIT-DGAPA-UNAM grant IN104219 (M\'exico), UC MEXUS-CONACYT grant CN-16-84 and CONACYT grant A1-S-16202 (M\'exico).}

\date{\today}

\subjclass[2010]{60J10, 60J60, 33C45, 42C05}

\keywords{Random walks. LU factorizations. Darboux transformations. Orthogonal polynomials. Geronimus transformation}

\begin{document}

\maketitle

\begin{abstract}

We consider UL and LU stochastic factorizations of the transition probability matrix of a random walk on the integers, which is a doubly infinite tridiagonal stochastic Jacobi matrix. We give conditions on the free parameter of both factorizations in terms of certain continued fractions such that this stochastic factorization is always possible. By inverting the order of the factors (also known as a Darboux transformation) we get new families of random walks on the integers. We identify the spectral matrices associated with these Darboux transformations (in both cases) which are basically conjugations by a matrix polynomial of degree one of a Geronimus transformation of the original spectral matrix. Finally, we apply our results to the random walk with constant transition probabilities with or without an attractive or repulsive force.

\end{abstract}

\section{Introduction}

The main goal of this paper is to describe the spectral matrices associated with the discrete Darboux transformations of the one-step transition probability matrix $P$ of a random walk on the integers $\ZZ$. This transition probability matrix is a doubly infinite tridiagonal stochastic matrix, also known as a Jacobi matrix or Jacobi operator (see \eqref{QZ} below) acting on the space $\ell_{\pi}^2(\ZZ)$ for certain sequence $\pi=(\pi_n)_{n\in\ZZ}$. The case of random walks on the nonnegative integers $\ZZ_{\geq0}$ has been recently considered by F.A. Gr\"unbaum and one of the authors of this paper in \cite{GdI3}. The main motivation for this UL and LU stochastic factorization is to divide the probabilistic model associated with the random walk into two different and simpler experiments, and combine them together to obtain a simpler description of the original probabilistic model (see applications to urn models in \cite{GdI3}).

We start by analyzing the conditions under we can perform a stochastic UL factorization of the form $P=P_UP_L$ or a stochastic LU factorization of the form $P=\widetilde P_L\widetilde P_U$, where all factors (bidiagonal matrices) are also stochastic matrices. In the case of the UL factorization, as the situation of random walks on $\ZZ_{\geq0}$, we still have one free parameter. But one important difference now is that for the LU factorization we also have one \emph{free parameter}, something that did not happen for random walks on $\ZZ_{\geq0}$, where the factorization was unique. In both cases, the factorization, if it can be achieved in terms of stochastic factors, will represent a family of factorizations of the original transition probability matrix $P$. In \cite{GdI3} it is shown that this free parameter has to be bounded from above by certain continued fraction if we want to guarantee that the factors are still stochastic matrices. In our case, since we are dealing with doubly infinite stochastic matrices, this free parameter (in both cases) has to be not only bounded from above but also bounded from below by another continued fraction which is built from the negative states of the original random walk. This will be the content of Section \ref{sec2}. UL and LU factorizations of stochastic matrices have been considered earlier in the literature (see for instance \cite{Gr1, Gr2, Hey, Vig}) but these factorizations are different from the one we try to consider here where all matrices involved are stochastic (see \cite{GdI3} for an extended discussion about this matter).

Once we have a stochastic UL or LU factorization of a tridiagonal stochastic matrix we can make use of the so-called \emph{discrete Darboux transformation}, consisting of inverting the order of the factors. The new matrices $\widetilde P=P_LP_U$ and $\widehat P=\widetilde P_U\widetilde P_L$ will also be doubly infinite tridiagonal and stochastic matrices. Since both factorizations come with one free parameter, we will have a family of new random walks different in general from the original one. These discrete Darboux transformations have been studied before in the context of the theory of orthogonal polynomials, in particular in the description of some families of Krall polynomials (see \cite{GH, GHH, SZ, Yo, Ze}). It has played an important role in the study of integrable systems (see \cite{MS}). An important issue that has not been considered before, as far as the authors know, is how to relate the spectral matrix $\Psi(x)$ associated with $P$ with the spectral matrices $\widetilde\Psi(x)$ or $\widehat\Psi(x)$ associated with $\widetilde P$ or $\widehat P$, respectively. By spectral matrix we mean that the spectral analysis of $P$ comes now with \emph{three measures}  $\psi_{\alpha,\beta}, \alpha,\beta=1,2,$ (two positive and one signed measure, and $\psi_{12}=\psi_{21}$ due to the symmetry) and they can be written in a $2\times 2$ spectral matrix of the form
\begin{align*}
\Psi(x)&=\begin{pmatrix} \psi_{11}(x) & \psi_{12}(x)\\ \psi_{12}(x) & \psi_{22}(x) \end{pmatrix},
\end{align*}
which turns out to be a proper weight matrix in the context of the theory of matrix-valued orthogonal polynomials. In fact, a random walk in $\ZZ$ can be viewed as a special type of discrete-time \emph{quasi-birth-and-death process} with state space $\ZZ_{\geq0}\times\{1,2\}$. These processes can be defined in general in state spaces of the form $\ZZ_{\geq0}\times\{1,\ldots, N\}$ for $N\geq1$ a positive integer (see \cite{LaR, Neu} for general references). The spectral analysis of these processes has been considered for instance in \cite{DRSZ, G2, G1, GdI2, dIR} to mention a few.

For random walks on $\mathbb{Z}_{\geq0}$ it is very well-known that the spectral measure associated with the Darboux transformation of a UL factorization is given by a so-called \emph{Geronimus transformation} of the original spectral measure, while for the LU factorization is given by a \emph{Christoffel transformation}. For random walks on $\mathbb{Z}$ we will show that the spectral matrices associated with these Darboux transformations (in both cases) are conjugations of the form
\begin{equation*}
\widetilde \Psi (x)=\bm S_0(x) \Psi_S (x) \bm S_0^*(x),\quad \widehat \Psi (x)=\bm T_0(x) \Psi_T (x) \bm T_0^*(x),
\end{equation*}
where $\Psi_S (x), \Psi_T (x)$ are Geronimus transformations of the original spectral matrix $\Psi$ and $\bm S_0(x), \bm T_0(x)$ are certain matrix polynomials of degree one (see Theorems \ref{thmorto} and \ref{thmorto2} in Section \ref{sec3}). In \cite{GdI4} a first attempt has been done to study stochastic Darboux transformations of block tridiagonal stochastic matrices, which are the transition probability matrices of discrete-time quasi-birth-and-death processes. In that paper the authors only consider one (Jacobi type) example previously introduced in \cite{GPT1} (see also \cite{GPT2, GPT3}). The UL factorization depends now on one free matrix-valued parameter and it is not clear how to transform the corresponding spectral weight matrices associated with the discrete Darboux transformation in general. In the case of random walks on $\ZZ$ we will only have one free (real) parameter. Our results can give some insights about how to compute the spectral matrix of the Darboux transformations for tridiagonal stochastic block matrices in general. For a different application of the Darboux transformation in the context of the noncommutative bispectral problem see \cite{GHY, G3, Zu} and references therein.

Once we have the spectral matrix it is easy to analyze the corresponding random walk in terms of the two independent families of polynomials which arise as a solution of the eigenvalue equation. For the case of random walks on $\mathbb{Z}_{\geq0}$ this was first done in a series of papers by S. Karlin and J. McGregor (inspired by work by W. Feller and H.P. McKean) in the 1950s (see \cite{KMc2, KMc3, KMc6}) where they studied first continuous-time birth-and-death processes and then the case of discrete-time random walks. Apart from an explicit expression of the $n$-step transition probabilities and the invariant measure, it is possible to study some other probabilistic properties using spectral methods such as recurrence, absorbing times, first return times or limit theorems. In the last section of \cite{KMc6} one can find the first attempt to perform the spectral analysis of a random walk on $\ZZ$ using orthogonal polynomials. We will recall this approach at the beginning of Section \ref{sec3}. After that, apart from \cite{Ber}, there are not so many references concerning the spectral analysis of doubly infinite Jacobi operators acting on $\ell_{\pi}^2(\ZZ)$. In \cite{Pru}, W.E. Pruitt studied the case of birth-and-death processes on $\ZZ$ (also known as bilateral birth-and-death processes). An example of this approach can be found in the last section of \cite{ILMV}. A more theoretical work about the spectral theory of Jacobi operators acting on $\ell_{\pi}^2(\ZZ)$ was given by D.E. Masson and J. Repka in \cite{MR} and revisited recently in \cite{DIW}. 

Finally, we will apply our results to two examples in Section \ref{sec4}. The first one is the random walk on $\ZZ$ with constant transition probabilities, while the second one is the random walk on $\ZZ$ with constant transition probabilities but allowing an attractive or repulsive force to or from the origin. In both cases we study the conditions under we get a stochastic UL and LU factorization, give the corresponding spectral matrices and the spectral matrices associated with both discrete Darboux transformations. As a final remark we will show that it is possible to choose certain values of the free parameters such that the Darboux transformation (both from the UL or the LU factorization) is \emph{invariant}, i.e. we get the same random walk after we perform the Darboux transformation. This phenomenon it is not possible for Darboux transformations of random walks on $\ZZ_{\geq0}$.

\section{Stochastic UL and LU factorization on the integers}\label{sec2}

Let $\{X_t : t=0,1,\ldots\}$ be an irreducible random walk on the integers $\mathbb{Z}$ with transition probability matrix $P$  given by
\begin{equation}\label{QZ}
P=
\left(
\begin{array}{ccc|cccc}
\ddots&\ddots&\ddots&&&\\
&c_{-1}&b_{-1}&a_{-1}&&&\\
\hline
&&c_0&b_0&a_0&&\\
&&&c_1&b_1&a_1&\\
&&&&\ddots&\ddots&\ddots
\end{array}
\right).
\end{equation}
The matrix $P$ is stochastic, i.e. all entries are nonnegative and
$$
c_n+b_n+a_n=1,\quad n\in\ZZ.
$$
Since the random walk is irreducible then we have that $0<a_n,c_n<1, n\in\ZZ$. A diagram of the transitions between the states is given by
\begin{center}
\includegraphics[scale=.75]{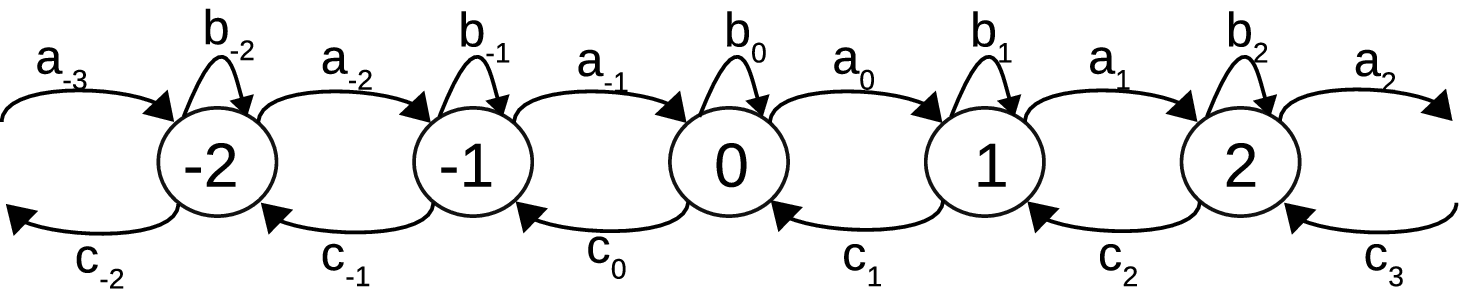}
\end{center}
Let us perform a UL factorization of $P$ in the following way
\begin{equation}\label{QZUL}
P=\left(\begin{array}{cc|cccc}
\ddots&\ddots&&\\
0&y_{-1}&x_{-1}&&\\
\hline
&0&y_0&x_0&\\
&&0&y_1&x_1\\
&&&&\ddots&\ddots
\end{array}
\right)\left(\begin{array}{ccc|ccc}
\ddots&\ddots&&&\\
&r_{-1}&s_{-1}&0&\\
\hline
&&r_0&s_0&0\\
&&&r_1&s_1&0\\
&&&&\ddots&\ddots
\end{array}
\right)=P_UP_L,
\end{equation}
where $P_U$ and $P_L$ are also stochastic matrices. This means that all entries of $P_U$ and $P_L$ are nonnegative and 
\begin{equation}\label{stpul}
x_n+y_n=1,\quad s_n+r_n=1,\quad n\in\ZZ.
\end{equation}
A direct computation shows that
\begin{align}
\nonumber a_n&=x_ns_{n+1},\\
\label{ULd}b_n&=x_nr_{n+1}+y_ns_n,\quad n\in\ZZ,\\
\nonumber c_n&=y_nr_n.
\end{align}
By the irreducibility conditions we immediately have that $0<x_n,y_n,s_n,r_n<1, n\in\ZZ$. The Markov chain associated with $P_U$ is a pure birth random walk on $\ZZ$ with diagram
\begin{center}
\includegraphics[scale=.75]{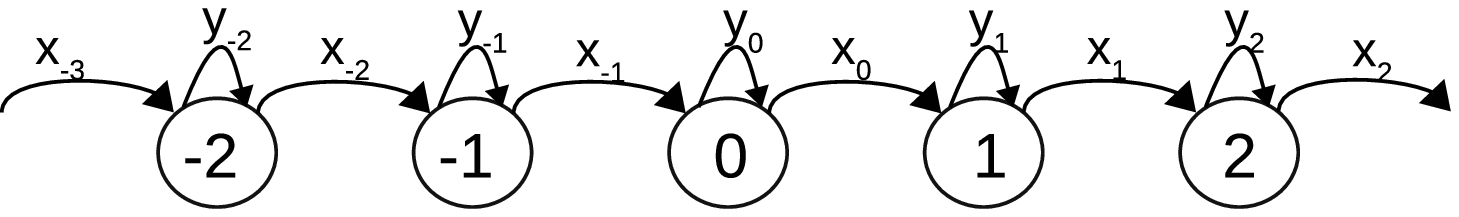}
\end{center}
while $P_L$ is a pure death random walk on $\ZZ$ with diagram
\begin{center}
\includegraphics[scale=.75]{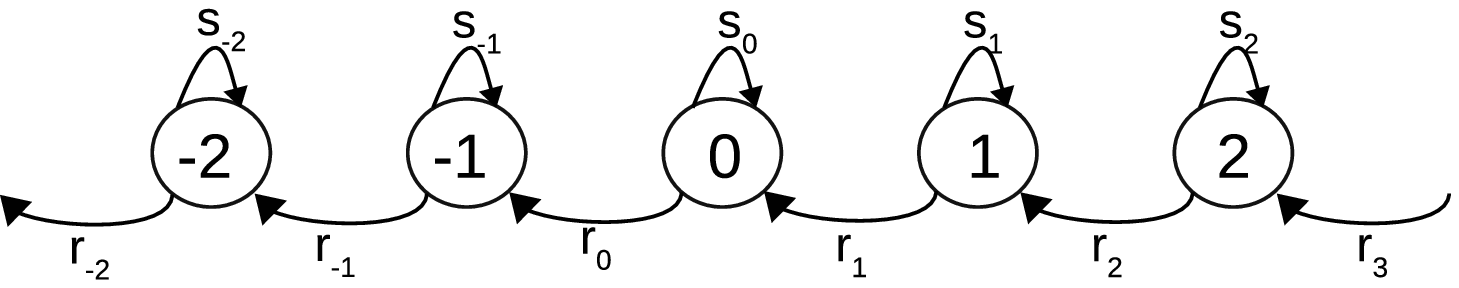}
\end{center}
As in the case of random walks on $\ZZ_{\geq0}$ (see \cite{GdI3}) we can compute all entries of $P_U$ and $P_L$ in terms of only \emph{one free parameter}, namely $y_0$. Indeed, for nonnegative values of the indices and $y_0$ fixed we can compute $x_0, s_1, r_1, y_1, x_1, s_2, r_2, y_2,\ldots$ recursively using \eqref{stpul} and \eqref{ULd}. Similarly, for negative values of the indices and $y_0$ fixed we can compute $r_0, s_0, x_{-1}, y_{-1}, r_{-1}, s_{-1}, x_{-2}, y_{-2},\ldots$ recursively using again \eqref{stpul} and  \eqref{ULd}. 

Following the same steps as in Lemma 2.1 of \cite{GdI3} we have that for a decomposition like in \eqref{QZUL}, i.e. $P=P_UP_L$, the matrix $P_U$ is stochastic if and only if the matrix $P_L$ is stochastic. We also have, following \eqref{ULd}, the following relations
\begin{equation*}\label{relys}
y_n=\frac{c_n}{1-s_n},\quad s_{n+1}=\frac{a_{n}}{1-y_n},\quad n\in\ZZ.
\end{equation*}
Although we can compute all coefficients $x_n, y_n, s_n, r_n$ in terms of one free parameter $y_0$, we can not infer anything about the positivity of these coefficients. This will be the goal of the next theorem. As it was done in Theorem 2.1 of \cite{GdI3} for random walks on $\ZZ_{\geq0}$ the matrices $P_U$ and $P_L$ are stochastic if and only if $0\leq y_0\leq H$, where $H$ is the continued fraction given below by \eqref{cfUL}. The difference now for random walks on $\ZZ$ is that the free parameter $y_0$ will be also bounded below by another number $H'$ defined by a continued fraction generated by the probabilities of the negative states of the random walk. We recommend the reference \cite{Wa} for the reader unfamiliar with continued fractions (as well as \cite{GdI3} for the case of random walks on $\mathbb{Z}_{\geq0}$).

Let $H$ and $H'$ be the continued fractions generated by alternatively choosing $a_n$ and $c_n$ in different directions, i.e.
\begin{equation}\label{cfUL}
H=1-\cfrac{a_0}{1-\cfrac{c_1}{1-\cfrac{a_1}{1-\cfrac{c_2}{1-\cdots}}}},\quad H'=\cfrac{c_0}{1-\cfrac{a_{-1}}{1-\cfrac{c_{-1}}{1-\cfrac{a_{-2}}{1-\cdots}}}}.
\end{equation}
In a different notation
\begin{equation*}
H=1-\cFrac{a_0}{1}-\cFrac{c_1}{1}-\cFrac{a_1}{1}-\cFrac{c_2}{\cdots},\quad H'=\cFrac{c_0}{1}-\cFrac{a_{-1}}{1}-\cFrac{c_{-1}}{1}-\cFrac{a_{-2}}{\cdots}.
\end{equation*}
For each continued fraction, consider the corresponding sequence of \emph{convergents} $(h_{n})_{n\geq0}$ and $(h_{-n}')_{n\geq0}$, given by
\begin{equation}\label{hhh}
h_{n}=\frac{A_n}{B_n},\quad h_{-n}'=\frac{A_{-n}'}{B_{-n}'}.
\end{equation}
Recall that the convergents of a continued fraction $H (H')$ are the sequence of truncated continued fractions of $H (H')$ and these are always rational numbers. In \cite{GdI3} (see also \cite{Wa}) it is proved that the numbers $A_n,B_n$ can be recursively obtained using the following formulas
\begin{align*}
A_{2n}&=A_{2n-1}-c_nA_{2n-2},\quad n\geq1,\quad A_{2n+1}=A_{2n}-a_nA_{2n-1},\quad n\geq0,\quad A_{-1}=1,\quad A_0=1,\\
B_{2n}&=B_{2n-1}-c_nB_{2n-2},\quad n\geq1,\quad B_{2n+1}=B_{2n}-a_nB_{2n-1},\quad n\geq0,\quad B_{-1}=0,\quad B_0=1.
\end{align*}
In the same way the numbers $A_{-n}',B_{-n}'$ can be recursively obtained using
\begin{align*}
A_{-2n}'&=A_{-2n+1}'-a_{-n}A_{-2n+2}',\quad n\geq1,\quad A_{-2n-1}'=A_{-2n}'-c_{-n}A_{-2n+1}',\quad n\geq0,\quad A_{1}'=-1,\quad A_0'=0,\\
B_{-2n}'&=B_{-2n+1}'-a_{-n}B_{-2n+2}',\quad n\geq1,\quad B_{-2n-1}'=B_{-2n}'-c_{-n}B_{-2n+1}',\quad n\geq0,\quad B_{1}'=0,\quad B_0'=1.
\end{align*}
Using these relations it is not hard to prove that
\begin{equation}\label{rels2}
\begin{split}
A_{-2n}'B_{-2n-1}'-A_{-2n-1}'B_{-2n}'&=-c_0a_{-1}c_{-1}\cdots a_{-n}c_{-n},\quad n\geq0,\\
A_{-2n-1}'B_{-2n-2}'-A_{-2n-2}'B_{-2n-1}'&=-c_0a_{-1}c_{-1}\cdots c_{-n}a_{-n-1},\quad n\geq0.
\end{split}
\end{equation}
Then we have the following:
\begin{theorem}\label{thmfracUL}
Let $H$ and $H'$ be the continued fractions given by \eqref{cfUL} and the corresponding convergents $h_n$ and $h_{-n}$ defined by \eqref{hhh}. Assume that 
\begin{equation}\label{AnBn}
0<A_n<B_n,\quad\mbox{and}\quad0<A_{-n}'<B_{-n}',\quad n\geq1.
\end{equation}
Then both $H$ and $H'$ are convergent. Moreover, let $P=P_UP_L$ as in \eqref{QZUL}. Assume that $H'\leq H$. Then, both $P_U$ and $P_L$ are stochastic matrices if and only if we choose $y_0$ in the following range
\begin{equation}\label{yy0r}
H'\leq y_0\leq H.
\end{equation}
\end{theorem}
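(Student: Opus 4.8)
The plan is to split the argument into two independent tasks: the convergence of the two continued fractions, and the determination of the admissible range of $y_0$; the second task is where the real work lies.

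\emph{Convergence of $H$ and $H'$.} Write $h_n=A_n/B_n$ and $h'_{-n}=A'_{-n}/B'_{-n}$ as in \eqref{hhh}. From the three–term recursions for $A_n,B_n$ one derives, by the standard computation that also underlies \eqref{rels2}, the determinant identities $A_nB_{n-1}-A_{n-1}B_n=-a_0c_1a_1c_2\cdots<0$ for every $n\ge1$ (a product of consecutive $a_k$'s and $c_k$'s). Since \eqref{AnBn} gives $B_n>0$, we get $h_n-h_{n-1}=(A_nB_{n-1}-A_{n-1}B_n)/(B_nB_{n-1})<0$, so $(h_n)_{n\ge0}$ is strictly decreasing; being positive it converges, $h_n\downarrow H$, and $A_n<B_n$ forces $0\le H\le1$. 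Symmetrically, \eqref{rels2} together with $B'_{-n}>0$ shows that $(h'_{-n})_{n\ge0}$ is strictly increasing, bounded above by $1$ (from $A'_{-n}<B'_{-n}$), hence $h'_{-n}\uparrow H'$ with $0<c_0=h'_{-1}\le H'\le1$. This gives the convergence of both continued fractions.

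\emph{Reduction to a recursion.} Since $x_n=1-y_n$ and $r_n=1-s_n$, stochasticity of $P_U$ and $P_L$ is equivalent to $y_n,s_n\in[0,1]$ for all $n\in\ZZ$; moreover the middle–diagonal relation $b_n=x_nr_{n+1}+y_ns_n$ in \eqref{ULd} is automatically satisfied once $a_n=x_ns_{n+1}$ and $c_n=y_nr_n$ hold. Thus, given $y_0$, the factorization exists with stochastic factors iff the forward recursion $s_{n+1}=a_n/(1-y_n)$, $y_{n+1}=c_{n+1}/(1-s_{n+1})$ ($n\ge0$) and the backward recursion $s_0=1-c_0/y_0$, $y_{-1}=1-a_{-1}/s_0$, $s_{-1}=1-c_{-1}/y_{-1},\dots$ both keep all iterates in $[0,1]$. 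The two halves are independent: the forward one only involves $a_0,c_1,a_1,c_2,\dots$ and the backward one only $c_0,a_{-1},c_{-1},a_{-2},\dots$ (in particular $y_0=0$ is impossible, since it would force $c_0=0$).

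\emph{Matching the thresholds with the convergents, and conclusion.} For the forward half I would prove, by induction on $n$, a closed Möbius formula for $1-s_n$ and $1-y_n$ in $y_0$ whose coefficients are precisely the $A_k,B_k$; one then reads off that $s_1,y_1,\dots,s_n$ all lie in $[0,1]$ exactly when $y_0\le h_{2n-1}$, and $y_n$ additionally lies in $[0,1]$ exactly when $y_0\le h_{2n}$ — the first two instances being $s_1\le1\iff y_0\le1-a_0=h_1$ and $y_1\le1\iff y_0\le h_2$. Letting $n\to\infty$ and using $h_n\downarrow H$, the forward recursion is valid for all $n\ge0$ iff $y_0\le H$; since $0<a_n,c_n<1$ the continued fraction never terminates, so $H<h_n$ strictly, and hence at $y_0=H$ no denominator vanishes and every entry is genuinely in $(0,1)$. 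The backward half is identical in spirit: $s_0,y_{-1},s_{-1},\dots$ stay in $[0,1]$ exactly when $y_0\ge h'_{-1},h'_{-2},h'_{-3},\dots$, so by $h'_{-n}\uparrow H'$ it is valid iff $y_0\ge H'$. Combining, $P_U,P_L$ are stochastic $\iff H'\le y_0\le H$, a nonempty range precisely under the standing hypothesis $H'\le H$; conversely, if $y_0>H$ then $y_0>h_K$ for some finite $K$ and the factorization breaks at a finite step (similarly if $y_0<H'$). The main obstacle is exactly this inductive identification of the threshold values with the convergents $h_k$ and $h'_{-k}$: one must carry the correct Möbius representation through the recursion and match it term by term with the recurrences defining $A_k,B_k$ and $A'_{-k},B'_{-k}$, and then handle the strict-versus-nonstrict inequalities at the endpoints $y_0=H$ and $y_0=H'$, where irreducibility (via non–termination of the continued fractions) is what is needed.
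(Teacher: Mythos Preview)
Your proposal is correct and follows essentially the same route as the paper: monotonicity of the convergents from the determinant identities \eqref{rels2}, and identification of the successive positivity constraints on the forward iterates $s_1,y_1,s_2,\dots$ and on the backward iterates $s_0,y_{-1},s_{-1},\dots$ with the inequalities $y_0\le h_k$ and $y_0\ge h'_{-k}$, respectively. The paper delegates the forward half (and the convergence of $H$) to \cite{GdI3} and unwinds the backward chain of equivalences directly rather than packaging it via a M\"obius formula in $y_0$, but the underlying argument is the same; your endpoint discussion at $y_0=H,H'$ is in fact slightly more explicit than the paper's.
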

\begin{proof}
The convergence of $H$ and the upper bound for $y_0$ is proved in Theorem 2.1 of \cite{GdI3}. From \eqref{rels2} and using the assumptions \eqref{AnBn} we have that
\begin{align*}
h_{-2n-2}'-h_{-2n-1}'&=\frac{A_{-2n-2}'}{B_{-2n-2}'}-\frac{A_{-2n-1}'}{B_{-2n-1}'}=\frac{c_0a_{-1}c_{-1}\cdots c_{-n}a_{-n-1}}{B_{-2n-1}'B_{-2n-2}'}>0,\quad n\geq0,\\
h_{-2n-1}'-h_{-2n}'&=\frac{A_{-2n-1}'}{B_{-2n-1}'}-\frac{A_{-2n}'}{B_{-2n}'}=\frac{c_0a_{-1}c_{-1}\cdots a_{-n}c_{-n}}{B_{-2n}'B_{-2n-1}'}>0,\quad n\geq0.
\end{align*}
Therefore we have
$$
0=h_0'<h_{-1}'<h_{-2}'<\cdots<h_{-2n}'<h_{-2n-1}'<h_{-2n-2}'<\cdots<1,
$$
that is, $(h_{-n}')_{n\geq0}$ is a bounded strictly increasing sequence, so it is convergent to $H'$.

For the lower bound for $y_0$, assume first that both $P_U$ and $P_L$ are stochastic matrices, so that $0<x_n,y_n,s_n,r_n<1,n\in\ZZ$. Then we have, using  \eqref{ULd}, that
\begin{align*}
s_0=&1-\frac{c_0}{y_0}>0 \Leftrightarrow \frac{c_0}{y_0}<1 \Leftrightarrow  c_0<y_0 \Leftrightarrow h_{-1}'<y_0,
\end{align*}
and
\begin{align*}
y_{-1}=&1-\frac{a_{-1}}{s_0}>0  \Leftrightarrow \frac{a_{-1}}{s_0}<1 \Leftrightarrow  a_{-1} <s_0
 \Leftrightarrow a_{-1}< 1-\frac{c_0}{y_0}\\& \quad \Leftrightarrow 1-a_{-1}>\frac{c_0}{y_0}  \Leftrightarrow  \frac{1}{1-a_{-1}}<\frac{y_0}{c_0} \Leftrightarrow \frac{c_0}{1-a_{-1}}<y_0 \Leftrightarrow h_{-2}'<y_0.
\end{align*}
Following the same argument we have
\begin{equation*}
\begin{split}
y_{-n}=1-\frac{a_{-n}}{s_{-n+1}}>0& \Leftrightarrow  a_{-n}<s_{-n+1} \Leftrightarrow 1-a_{-n}>\frac{c_{-n+1}}{y_{-n+1}} \Leftrightarrow \frac{c_{-n+1}}{1-a_{-n}}<y_{-n+1} \\
&\Leftrightarrow \frac{c_{-n+1}}{1-a_{-n}}<1-\frac{a_{-n+1}}{s_{-n+2}} \Leftrightarrow \frac{a_{-n+1}}{1-\D\frac{c_{-n+1}}{1-a_{-n}}}<s_{-n+2} \\
&\cdots \Leftrightarrow \cFrac{a_{-n+n-1}}{1}-\cFrac{c_{-1}}{1}-\cFrac{a_{-2}}{1}\cdots -\cFrac{a_{-n}}{1} <s_{-n+n}\\
&\Leftrightarrow \cFrac{a_{-1}}{1}-\cFrac{c_{-1}}{1}-\cFrac{a_{-2}}{1}\cdots -\cFrac{a_{-n}}{1} <1-\frac{c_0}{y_0}\\
&\Leftrightarrow \cfrac{c_0}{1}- \cFrac{a_{-1}}{1}-\cFrac{c_{-1}}{1}-\cFrac{a_{-2}}{1}\cdots -\cFrac{a_{-n}}{1} <y_0 \\
&\Leftrightarrow h_{-2n}'<y_0.
\end{split}
\end{equation*}
Therefore we always have
\begin{equation*}
0=h_0'<h_{-n}'<H'\le y_0,\quad n\geq0,
\end{equation*}
and we get the lower bound for $y_0$. On the contrary, if \eqref{yy0r} holds, in particular we have that $h_{-n}<H'\leq y_0\leq H<h_n$ for every $n\geq0$. Following the same steps as before and Theorem 2.1 of \cite{GdI3}, using an argument of strong induction, will lead us to the fact that both $P_U$ and $P_L$ are stochastic matrices with the conditions that $0<x_n,y_n,s_n,r_n<1,n\in\ZZ$.
\end{proof}

Consider now the LU factorization of the stochastic matrix $P$ given by \eqref{QZ} in the following way
\begin{equation}\label{QZLU}
P=\left(\begin{array}{ccc|ccc}
\ddots&\ddots&&&\\
&\tilde r_{-1}&\tilde s_{-1}&0&\\
\hline
&&\tilde r_0&\tilde s_0&0\\
&&&\tilde r_1&\tilde s_1&0\\
&&&&\ddots&\ddots
\end{array}
\right)\left(\begin{array}{cc|cccc}
\ddots&\ddots&&\\
0&\tilde y_{-1}&\tilde x_{-1}&&\\
\hline
&0&\tilde y_0&\tilde x_0&\\
&&0&\tilde y_1&\tilde x_1\\
&&&&\ddots&\ddots
\end{array}
\right)=\widetilde P_L\widetilde P_U,
\end{equation}
where again $\widetilde P_L$ and $\widetilde P_U$ are also stochastic matrices, i.e. all entries of $\tilde P_L$ and $\tilde P_U$ are nonnegative and
\begin{equation}\label{stplu}
\tilde r_n+ \tilde s_n=1, \quad \tilde y_n +\tilde x_n=1,\quad  n \in \ZZ.
\end{equation}
Now, a direct computation shows that
\begin{align}
\nonumber a_n&=\tilde s_n \tilde x_n,\\
\label{LUd}b_n&=\tilde r_n\tilde x_{n-1}+\tilde s_n\tilde y_n,\quad n\in\ZZ,\\
\nonumber c_n&=\tilde r_n\tilde y_{n-1}.
\end{align}
By the irreducibility condition we have that $0<\tilde r_n$, $\tilde s_n$, $\tilde y_n$, $\tilde x_n<1$, $n\in \ZZ$. Now there is an \emph{important difference} if we compare with the case of random walks on $\ZZ_{\geq0}$, where the LU factorization is unique. In this case there will be also one \emph{free parameter}, namely $\tilde r_0$, from where we can compute all the coefficients of the matrices $\widetilde P_L$ and $\widetilde P_U$. If we fix $\tilde r_0$, for the nonnegative values of the indices we can compute $\tilde s_0$, $\tilde x_0$, $\tilde y_0$, $\tilde r_1$, $\tilde s_1$, $\tilde x_1$, $\tilde y_1$, $\tilde r_2$, $\dots$ recursively from \eqref{stplu} and \eqref{LUd}, while for the negative values of the indices we can compute $\tilde y_{-1}$, $\tilde x_{-1}$, $\tilde s_{-1}$, $\tilde r_{-1}$, $\tilde y_{-2}$, $\tilde x_{-2}$, $\tilde s_{-2}$, $\tilde r_{-2}$, $\dots$ recursively again from \eqref{stplu} and \eqref{LUd}.

Following the same steps as in Lemma 2.1 of \cite{GdI3} we have that for a decomposition like in \eqref{QZLU}, i.e. $P=\widetilde P_L \widetilde P_U$, the matrix $\widetilde P_L$ is stochastic if and only if the matrix $\widetilde P_U$ is stochastic and by \eqref{LUd}, the following relations holds
\begin{equation*}\label{relyslu}
\tilde r_n=\frac{c_n}{1-\tilde x_{n-1}},\quad \tilde x_{n}=\frac{a_{n}}{1-\tilde r_n},\quad n\in\ZZ.
\end{equation*}

Although we can compute all coefficients $\tilde r_n$, $\tilde s_n$, $\tilde y_n$ and $\tilde x_n$ in terms of one free parameter $\tilde r_0$, we can not infer anything about the positivity of these coefficients. However we have an analogue of Theorem \ref{thmfracUL} where now the free parameter $\tilde r_0$ will be bounded by the same continued fractions.

\begin{theorem}\label{thmfracLU}
Let $H$ and $H'$ be the continued fractions given by \eqref{cfUL} and the corresponding convergents $h_n$ and $h_{-n}$ defined by \eqref{hhh}. Assume conditions \eqref{AnBn}. Then both $H$ and $H'$ are convergent. Moreover, let $P=\widetilde P_L\widetilde P_U$ as in \eqref{QZLU}. Assume that $H'\leq H$. Then, both $\widetilde P_L$ and $\widetilde P_U$ are stochastic matrices if and only if we choose $\tilde r_0$ in the following range
\begin{equation*}\label{rr0r}
H'\leq \tilde r_0\leq H.
\end{equation*}
\end{theorem}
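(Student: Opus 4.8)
The plan is to deduce Theorem \ref{thmfracLU} from the already-established Theorem \ref{thmfracUL} by exhibiting an explicit dictionary between the coefficients of the LU factorization \eqref{QZLU} and those of the UL factorization \eqref{QZUL}. Precisely, I claim that if one sets $y_0=\tilde r_0$ and runs both recursions, then
\[
y_n=\tilde r_n,\qquad s_n=\tilde x_{n-1},\qquad n\in\ZZ,
\]
and hence also $x_n=1-y_n=1-\tilde r_n=\tilde s_n$ and $r_n=1-s_n=1-\tilde x_{n-1}=\tilde y_{n-1}$ for all $n$. Granting this, the statement ``both $\widetilde P_L$ and $\widetilde P_U$ are stochastic'', which by \eqref{stplu} means $0<\tilde r_n,\tilde s_n,\tilde y_n,\tilde x_n<1$ for all $n\in\ZZ$, is term-by-term identical to ``$0<x_n,y_n,r_n,s_n<1$ for all $n\in\ZZ$'', i.e. to ``both $P_U$ and $P_L$ are stochastic''. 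Since moreover $\tilde r_0=y_0$, Theorem \ref{thmfracUL} (whose hypotheses \eqref{AnBn} and $H'\le H$ are exactly those assumed here, and which already supplies the convergence of $H$ and $H'$) immediately yields that this happens if and only if $H'\le\tilde r_0\le H$.

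To verify the dictionary I would first eliminate $\tilde s_n$ and $\tilde y_n$ from \eqref{LUd} via \eqref{stplu}, obtaining the forward recursion $\tilde x_n=a_n/(1-\tilde r_n)$, $\tilde r_{n+1}=c_{n+1}/(1-\tilde x_n)$ (run for $n\ge 0$ starting from $\tilde r_0$) and the backward recursion $\tilde x_{n-1}=1-c_n/\tilde r_n$, $\tilde r_{n-1}=1-a_{n-1}/\tilde x_{n-1}$ (run for $n\le 0$ starting from $\tilde r_0$). Doing the same to \eqref{ULd} via \eqref{stpul} gives $s_{n+1}=a_n/(1-y_n)$, $y_{n+1}=c_{n+1}/(1-s_{n+1})$ going up from $y_0$, and $s_n=1-c_n/y_n$, $y_{n-1}=1-a_{n-1}/s_n$ going down from $y_0$. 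The substitution $\tilde r_n\mapsto y_n$, $\tilde x_n\mapsto s_{n+1}$ then carries one system verbatim into the other, with the same $a_k$'s and $c_k$'s appearing at every step; as each system is uniquely determined by its (shared) initial value, the claimed identities follow by a simple induction in the two directions. Note that this induction is valid precisely on the set where all quantities stay in $(0,1)$, so the equivalence of the two stochasticity conditions is genuinely an equivalence, which is what makes the ``if and only if'' transfer.

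There is essentially no serious obstacle: the content is entirely in the bookkeeping of indices, and the only point that requires a moment's care is confirming that the index shift in $s_n\leftrightarrow\tilde x_{n-1}$ is consistent when one matches the $n\le 0$ branch to the $n\ge 1$ branch (checking, e.g., that $s_0\leftrightarrow\tilde x_{-1}$ and $s_1\leftrightarrow\tilde x_{0}$ both fit the single rule). If one prefers a self-contained argument, one can instead simply reproduce the proof of Theorem \ref{thmfracUL} with $\tilde r_0$ in place of $y_0$: the upper bound $\tilde r_0\le H$ comes from iterating $\tilde x_n<1$ and $\tilde r_{n+1}<1$ upward (exactly the computation of Theorem 2.1 of \cite{GdI3}, since this half of the recursion is literally the UL one), and the lower bound $\tilde r_0\ge H'$ comes from iterating $\tilde x_{n-1}>0$ and $\tilde r_{n-1}>0$ downward, reproducing the chain of equivalences $\tilde x_{-1}>0\Leftrightarrow h'_{-1}<\tilde r_0$, $\tilde r_{-1}>0\Leftrightarrow h'_{-2}<\tilde r_0$, and in general $h'_{-n}<\tilde r_0$, together with the lower-bound part of the proof of Theorem \ref{thmfracUL}. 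Either way the converse direction is handled, as there, by strong induction using $h'_{-n}<H'\le\tilde r_0\le H<h_n$.
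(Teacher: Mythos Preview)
Your proof is correct. The dictionary $y_n=\tilde r_n$, $s_n=\tilde x_{n-1}$ (hence $x_n=\tilde s_n$, $r_n=\tilde y_{n-1}$) does hold: the forward and backward recursions derived from \eqref{ULd}--\eqref{stpul} and from \eqref{LUd}--\eqref{stplu} are literally the same two-step iteration under this relabeling, so with the shared seed $y_0=\tilde r_0$ they produce identical sequences. The stochasticity conditions then match term by term, and Theorem~\ref{thmfracUL} transfers verbatim.

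This is a genuinely different route from the paper's. The paper simply says the proof is ``similar to the proof of Theorem~\ref{thmfracUL} but using \eqref{LUd} instead of \eqref{ULd}'', i.e.\ one reruns the continued-fraction inequalities with $\tilde r_0$ in the role of $y_0$---which is exactly the self-contained alternative you sketch in your final paragraph. Your dictionary argument is shorter and more conceptual: it explains \emph{why} the same continued fractions $H,H'$ and the same bounds appear, rather than rediscovering them by parallel computation. The paper's approach has the minor advantage of being self-contained (no appeal to Theorem~\ref{thmfracUL}), but at the cost of repeating essentially the same calculation.
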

\begin{proof} 
The proof is similar to the proof of Theorem \ref{thmfracUL} but using \eqref{LUd} instead of \eqref{ULd}.
\end{proof}

\section{Stochastic Darboux transformations and the associated spectral matrices}\label{sec3}

In the previous section we have shown under what conditions a doubly infinite stochastic matrix $P$ like in \eqref{QZ} can be decomposed as a UL (or LU) factorization where both factors are still stochastic matrices. In both factorizations we have one free parameter. We knew about this fact for the UL factorization, but now for the LU factorization the phenomenon is new. Once we have a UL (or LU) factorization we can perform what is called a \emph{discrete Darboux transformation}. The Darboux transformation has a long history but probably the first reference of a discrete Darboux transformation like we study here appeared in \cite{MS} in connection with the Toda lattice. We explain now what a Darboux transformation is in our context.

If $P=P_UP_L$ as in \eqref{QZUL}, then by inverting the order of the factors we obtain another tridiagonal matrix of the form
\begin{equation}\label{QZULdar}
\widetilde P=P_LP_U=\left(\begin{array}{ccc|ccc}
\ddots&\ddots&&&\\
&r_{-1}&s_{-1}&0&\\
\hline
&&r_0&s_0&0\\
&&&r_1&s_1&0\\
&&&&\ddots&\ddots
\end{array}
\right)\left(\begin{array}{cc|cccc}
\ddots&\ddots&&\\
0&y_{-1}&x_{-1}&&\\
\hline
&0&y_0&x_0&\\
&&0&y_1&x_1\\
&&&&\ddots&\ddots
\end{array}
\right).
\end{equation}
The new coefficients of the matrix $\widetilde P$ are given by
\begin{align}
\nonumber\tilde{a}_n&=s_nx_{n},\\
\label{coeffDLU}\tilde{b}_n&=r_nx_{n-1}+s_ny_n,\quad n\in\ZZ,\\
\nonumber\tilde{c}_n&=r_ny_{n-1}.
\end{align}
The matrix $\widetilde{P}$ is actually stochastic, since the multiplication of two stochastic matrices is again a stochastic matrix. Therefore it gives a \emph{family} of new random walks $\{\widetilde X_t : t=0,1,\ldots\}$ on the integers $\ZZ$ with coefficients $(\tilde{a}_n)_{n\in\ZZ}$, $(\tilde{b}_n)_{n\in\ZZ}$ and $(\tilde{c}_n)_{n\in\ZZ}$ depending on a free parameter $y_0$. 

The same can be done for the LU factorization \eqref{QZLU} of the form $P=\widetilde P_L\widetilde P_U$. Indeed, the new random walk is given by
\begin{equation}\label{QZLUdar}
\widehat P=\widetilde P_U\widetilde P_L=\left(\begin{array}{cc|cccc}
\ddots&\ddots&&\\
0&\tilde y_{-1}&\tilde x_{-1}&&\\
\hline
&0&\tilde y_0&\tilde x_0&\\
&&0&\tilde y_1&\tilde x_1\\
&&&&\ddots&\ddots
\end{array}
\right)\left(\begin{array}{ccc|ccc}
\ddots&\ddots&&&\\
&\tilde r_{-1}&\tilde s_{-1}&0&\\
\hline
&&\tilde r_0&\tilde s_0&0\\
&&&\tilde r_1&\tilde s_1&0\\
&&&&\ddots&\ddots
\end{array}
\right),
\end{equation}
where the new coefficients are
\begin{equation}\label{coeffDUL}
\begin{split}
\hat{a}_n&=\tilde x_{n}\tilde s_{n+1},\\
\hat{b}_n&=\tilde x_{n}\tilde r_{n+1}+\tilde y_n\tilde s_n,\quad n\in\ZZ,\\
\hat{c}_n&=\tilde y_{n}\tilde r_n.
\end{split}
\end{equation}
Again, the matrix $\widehat{P}$ is stochastic, so we have a \emph{family} of new random walks $\{\widehat X_t : t=0,1,\ldots\}$ on the integers $\ZZ$ with coefficients $(\hat{a}_n)_{n\in\ZZ}$, $(\hat{b}_n)_{n\in\ZZ}$ and $(\hat{c}_n)_{n\in\ZZ}$ depending on one free parameter $\tilde r_0$. 

In terms of a model driven by urn experiments both factorizations may be thought as two urn experiments, Experiment 1 and Experiment 2, respectively. We first perform the Experiment 1 and with the result we immediately perform the Experiment 2. The model for the Darboux transformation will be reversing the order of both experiments. For more details about these urn models see \cite{GdI3,GdI4}.

\medskip

Now we will focus in the following question: given the spectrum of the doubly infinite matrix $P$, how can we compute the spectrum of the Darboux transformations $\widetilde{P}$ and $\widehat{P}$? It turns out that both transformations will be related with what is called a \emph{Geronimus transformation} (see below). Before that let us introduce some notation to study the spectral measures associated with the original random walk $P$.

We will follow the last section of \cite{KMc6}. For $P$ like in \eqref{QZ} consider the eigenvalue equation $ x q^\alpha(x) = P q^\alpha(x) $ where $q^\alpha(x)=(\cdots, Q_{-1}^\alpha(x),Q_0^\alpha(x),Q_1^\alpha(x),\cdots)^T, \alpha=1,2$. For each $ x $ real or complex there exist two polynomial families of linearly independent solutions $ Q_n ^ {\alpha} (x), \alpha = 1,2, n \in \ZZ , $ depending on the initial values at $n = 0 $ and $ n = -1 $. These polynomials are given by
\begin{align}
\nonumber Q_0^1(x)&=1,\quad Q_{0}^2(x)=0,\\
\label{TTRRZ}Q_{-1}^1(x)&=0,\quad Q_{-1}^2(x)=1,\\
\nonumber xQ_n^{\alpha}(x)&=a_nQ_{n+1}^{\alpha}(x)+b_nQ_n^{\alpha}(x)+c_nQ_{n-1}^{\alpha}(x),\quad n\in\ZZ,\quad\alpha=1,2.
\end{align}
Observe that 
\begin{equation}\label{degqs}
\begin{split}
\deg(Q_n^1)&=n,\quad n\geq0,\hspace{.95cm}\deg(Q_n^2)=n-1,\quad n\geq1,\\
\deg(Q_{-n-1}^1)&=n-1,\quad n\geq1,\quad\deg(Q_{-n-1}^2)=n,\quad n\geq0.
\end{split}
\end{equation}
From the three-term recurrence relation \eqref{TTRRZ} it is possible to compute the leading coefficients of the polynomials $Q_n^{\alpha}(x), \alpha =1,2, n\in \ZZ$. Indeed, for $n\geq0,$ we have
\begin{align}
\label{lead1}Q_n^1(x)&=R_n^1x^n+\mathcal{O}(x^{n-1}),\quad R_0^1=1,\quad R_n^1=(a_0\cdots a_{n-1})^{-1},\quad n\geq1,\\
\nonumber Q_{-n-1}^1(x)&=L_{n-1}^1x^{n-1}+\mathcal{O}(x^{n-2}),\quad L_{n-1}^1=-a_{-1}(c_{-1}\cdots c_{-n})^{-1},\quad n\geq1,
\end{align}
and
\begin{align}
\nonumber Q_n^2(x)&=R_{n-1}^2x^{n-1}+\mathcal{O}(x^{n-2}),\quad R_{n-1}^2=-c_0(a_0\cdots a_{n-1})^{-1},\quad n\geq1,\\
\label{lead2}Q_{-n-1}^2(x)&=L_{n}^2x^n+\mathcal{O}(x^{n-1}),\quad L_0^2=1,\quad L_{n}^2=(c_{-1}\cdots c_{-n})^{-1},\quad n\geq1.
\end{align}
Let us define the \emph{potential coefficients} as
\begin{equation}\label{potcoeff}
\pi_0=1,\quad \pi_n=\frac{a_0a_1\cdots a_{n-1}}{c_1c_2\cdots c_n},\quad \pi_{-n}=\frac{c_0c_{-1}\cdots c_{-n+1}}{a_{-1}a_{-2}\cdots a_{-n}},\quad n\geq1.
\end{equation}
In particular we have that $\pi P=\pi$, i.e. $\pi=(\pi_n)_{n\in\ZZ}$ is an invariant vector of $P$. In the Hilbert space $\ell^2_{\pi}(\ZZ)$ the matrix $P$ gives rise to a self-adjoint operator of norm $\leq1$, which we will denote by $P$, abusing the notation. This result is a consequence of Corollary 2.2 of \cite{MR} since the coefficients $(a_n)_{n\in\ZZ}$ and $(c_n)_{n\in\ZZ}$ are probabilities and therefore $0<a_n,c_n<1$. Applying the spectral theorem \emph{three times}, there exist three unique measures $\psi_{11}(x), \psi_{22}(x)$ and $\psi_{12}(x)$  (since $\psi_{12}(x)=\psi_{21}(x)$ as a consequence of $P$ being self-adjoint and the symmetry of the inner product) supported on the interval $[-1,1]$ such that 
\begin{equation}\label{ortoZ}
\sum_{\alpha,\beta=1}^{2}\int_{-1}^{1}Q_i^{\alpha}(x)Q_j^{\beta}(x)d\psi_{\alpha\beta}(x)=\frac{\delta_{i,j}}{\pi_j},\quad i,j\in\ZZ.
\end{equation}

The measures $\psi_{11}$ and $\psi_{22}$ are positive (in fact $\psi_{11}$ is a probability measure but $\psi_{22}$ is not, since $\int_{-1}^1d\psi_{22}(x)=1/\pi_{-1}$). The measure $\psi_{12}$ is a signed measure satisfying $0=\int_{-1}^1d\psi_{12}(x)$. For simplicity let us assume that the three measures are continuously differentiable with respect to the Lebesgue measure, i.e. $d\psi_{\alpha\beta}(x)=\psi_{\alpha\beta}(x)dx, \alpha,\beta=1,2$, abusing the notation. These 3 measures can be written in matrix form as the $2\times2$ matrix
\begin{equation}\label{2spmt}
\Psi(x)=\begin{pmatrix} \psi_{11}(x) & \psi_{12}(x)\\ \psi_{12}(x) & \psi_{22}(x) \end{pmatrix},
\end{equation}
so that the orthogonality relations \eqref{ortoZ} can be written in matrix form as
\begin{equation}\label{ortoZ2}
\int_{-1}^{1}\left(Q_i^1(x),Q_i^2(x)\right)\Psi(x)\begin{pmatrix}Q_j^1(x)\\Q_j^2(x)\end{pmatrix}dx=\frac{\delta_{i,j}}{\pi_j},\quad i,j\in\ZZ.
\end{equation}
The matrix $\Psi(x)$ in \eqref{2spmt} is called the \emph{spectral matrix} associated with $P$. The orthogonality conditions are valid for any indexes $ i, j \in \ZZ$.  With this information we can compute the $n$-step transition probabilities of the random walk $\{X_t : t=0,1,\ldots\}$, given by the so-called Karlin-McGregor integral representation formula (see \cite{KMc6})
\begin{equation}\label{KmcG1}
P_{ij}^{(n)}\doteq\mathbb{P}(X_n=j \;| X_0=i)=\pi_j\int_{-1}^{1}x^n\left(Q_i^1(x),Q_i^2(x)\right)\Psi(x)\begin{pmatrix}Q_j^1(x)\\Q_j^2(x)\end{pmatrix}dx,\quad i,j\in\ZZ.
\end{equation}
It is possible to relabel the states in such a way that all the information of $ P $ can be collected in a semi-infinite block tridiagonal matrix $ \bm P $ with blocks of size $ 2 \times2 $. Indeed, after the new labeling
$$
\{0,1,2,\ldots\}\to\{0,2,4,\ldots\},\quad\mbox{and}\quad\{-1,-2,-3,\ldots\}\to\{1,3,5,\ldots\},
$$
we have that $P$ (doubly infinite tridiagonal) is equivalent to a matrix $\bm P$ (semi-infinite block tridiagonal) of the form
\begin{equation*}
\bm P=\begin{pmatrix}
B_0&A_0&&&\\
C_1&B_1&A_1&&\\
&C_2&B_2&A_2&\\
&&\ddots&\ddots&\ddots
\end{pmatrix},
\end{equation*}
where
\begin{align*}
B_0&=\begin{pmatrix} b_0 & c_0\\a_{-1} & b_{-1}\end{pmatrix},\quad B_n=\begin{pmatrix} b_n & 0\\ 0 & b_{-n-1}\end{pmatrix},\quad n\geq1,\\
A_n&=\begin{pmatrix} a_n & 0\\ 0 & c_{-n-1}\end{pmatrix},\quad n\geq0,\quad C_n=\begin{pmatrix} c_n & 0\\ 0 & a_{-n-1}\end{pmatrix},\quad n\geq1.
\end{align*}
The random walk generated by $\bm P $ can be interpreted as a walk that takes values in the two-dimensional state space $\ZZ_{\geq0}\times \{1,2 \}. $ These type of processes are called discrete-time \emph {quasi-birth-and-death processes}. In general these processes allow transitions between all two-dimensional adjacent states (see \cite{LaR, Neu} for a general reference).

If we define the matrix-valued polynomials
\begin{equation}\label{2QMM}
\bm Q_n(x)=\begin{pmatrix} Q_n^1(x) & Q_n^2(x) \\ Q_{-n-1}^1(x) & Q_{-n-1}^2(x)\end{pmatrix},\quad n\geq0,
\end{equation}
then we have
\begin{align*}
x\bm Q_0(x)&=A_0\bm Q_{1}(x)+B_0\bm Q_0(x),\quad \bm Q_0(x)=I_{2\times2},\\
x\bm Q_n(x)&=A_n\bm Q_{n+1}(x)+B_n\bm Q_n(x)+C_n\bm Q_{n-1}(x),\quad n\geq1,
\end{align*} 
where $I_{2\times2}$ denotes the $2\times2$ identity matrix. Observe that $\deg(\bm Q_n)=n$ and the leading coefficient is a nonsingular matrix (by \eqref{lead1} and \eqref{lead2}). The matrix orthogonality is defined in terms of the (matrix-valued) inner product
\begin{equation}\label{ortoZ3}
\int_{-1}^{1}\bm Q_n(x)\Psi(x)\bm Q_m^*(x)dx=\begin{pmatrix} 1/\pi_n & 0 \\ 0 & 1/\pi_{-n-1}\end{pmatrix}\delta_{nm},
\end{equation}
where $A^*$ is the Hermitian transpose of a matrix $A$ and $\pi=(\pi_n)_{n\in\ZZ}$ is given by \eqref{potcoeff}. In this case we have (see \cite{DRSZ, G2}) the Karlin-McGregor integral representation formula where the $2\times2$ block entry $(i,j)$ is given by
\begin{equation*}
\bm P_{ij}^{(n)}=\left(\int_{-1}^{1}x^n\bm Q_i(x)\Psi(x)\bm Q_j^*(x)dx\right)\begin{pmatrix} \pi_j & 0 \\ 0 & \pi_{-j-1}\end{pmatrix},\quad i,j\in\ZZ_{\geq0}.
\end{equation*}
\begin{remark}\label{remrec}
In Corollaries 4.1 and 4.2 of \cite{DRSZ} one can find some results concerning recurrence for discrete-time quasi-birth-and-death processes. Applying these to the case of random walks on $\ZZ$ we have that the random walk is \emph{recurrent} if and only if $\int_{-1}^1\psi_{\alpha,\beta}(x)/(1-x)dx=\infty$ for some $\alpha,\beta=1,2,$ and it is \emph{positive recurrent} if and only if one of the measures $\psi_{\alpha,\beta}(x),\alpha,\beta=1,2,$ has a jump at the point 1.
\end{remark}

In the following lemma we will give a characterization of the orthogonality of the vector-valued polynomials $\left(Q_n^1(x),Q_n^2(x)\right)$ in terms of monomials.
\begin{lemma}\label{lemorto}
Let $(Q_n^\alpha)_{n\in\ZZ}$ be the polynomials defined by \eqref{TTRRZ}. Then the vector-valued polynomials $\left(Q_n^1(x),Q_n^2(x)\right)$, $n\in\ZZ$ are orthogonal in the sense of \eqref{ortoZ2} if and only if for $n\geq0$ we have
\begin{equation}\label{cond1}
\int_{-1}^1\left(Q_n^1(x),Q_n^2(x)\right)\Psi(x)x^jdx=
\begin{cases}
(0,0),&\mbox{for}\quad j=0,1,\ldots,n-1,\\
(\alpha_n,0),\alpha_n\neq0,&\mbox{for}\quad j=n,
\end{cases}
\end{equation}
and
\begin{equation}\label{cond2}
\int_{-1}^1\left(Q_{-n-1}^1(x),Q_{-n-1}^2(x)\right)\Psi(x)x^jdx=
\begin{cases}
(0,0),&\mbox{for}\quad j=0,1,\ldots,n-1,\\
(0,\beta_n),\beta_n\neq0,&\mbox{for}\quad j=n.
\end{cases}
\end{equation}
Moreover $\alpha_0=1, \alpha_n=c_1\cdots c_n, n\geq1$ and $\beta_n=c_0^{-1}a_{-1}\cdots a_{-n-1},n\geq0$.
\end{lemma}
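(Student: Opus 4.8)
The plan is to establish the equivalence between the matrix orthogonality \eqref{ortoZ2} and the scalar moment conditions \eqref{cond1}--\eqref{cond2} by exploiting the triangularity of the family $\{(Q_k^1(x),Q_k^2(x))\}_{k\in\ZZ}$ with respect to the degree filtration. The preliminary step is the claim that for every $m\geq0$ the $2(m+1)$ row vectors $(Q_k^1(x),Q_k^2(x))$ with $0\leq k\leq m$ together with $(Q_{-k-1}^1(x),Q_{-k-1}^2(x))$ with $0\leq k\leq m$ form a basis of the space of $1\times2$ row-vector polynomials of degree $\leq m$ (and likewise for column vectors, by transposition). They lie in that space by \eqref{degqs}, and since $\deg\bm Q_m=m$ with nonsingular leading coefficient (as noted after \eqref{2QMM}) an induction on $m$ works: given $v(x)$ of degree $\leq m$, cancel its degree-$m$ coefficient by a left multiple $u\bm Q_m(x)$ with $u\in\RR^{1\times2}$ and apply the inductive hypothesis to $v(x)-u\bm Q_m(x)$. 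Hence every monomial $(x^l,0)$ and $(0,x^l)$ with $l\leq m$ is a real linear combination of these vectors; moreover, reading off degree-$n$ coefficients from \eqref{degqs} and \eqref{lead1}--\eqref{lead2}, the only indices contributing at degree $n$ are $k=n$, whose degree-$n$ coefficient has direction $\binom{1}{0}$ and size $R_n^1$, and $k=-n-1$, whose degree-$n$ coefficient has direction $\binom{0}{1}$ and size $L_n^2$.

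For the implication \eqref{ortoZ2} $\Rightarrow$ \eqref{cond1}--\eqref{cond2}, fix $n\geq0$ and $0\leq j\leq n$ and expand $\binom{x^j}{0}$ and $\binom{0}{x^j}$ in the columns $\binom{Q_k^1}{Q_k^2}$, $-j-1\leq k\leq j$. Substituting into $\int_{-1}^1(Q_n^1,Q_n^2)\Psi(x)\,x^j\,dx$ (and into the analogous integral with $(Q_{-n-1}^1,Q_{-n-1}^2)$) and using \eqref{ortoZ2}, all terms vanish when $j<n$ because $n$ (resp.\ $-n-1$) is not among $\{-j-1,\dots,j\}$; this is the vanishing part of \eqref{cond1} and \eqref{cond2}. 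When $j=n$ only $k=n$ (resp.\ $k=-n-1$) survives, and with the degree-$n$ coefficients above and \eqref{potcoeff} one gets $\int_{-1}^1(Q_n^1,Q_n^2)\Psi(x)\,x^n\,dx=\big(1/(R_n^1\pi_n),0\big)=(c_1\cdots c_n,0)$ and $\int_{-1}^1(Q_{-n-1}^1,Q_{-n-1}^2)\Psi(x)\,x^n\,dx=\big(0,1/(L_n^2\pi_{-n-1})\big)=(0,c_0^{-1}a_{-1}\cdots a_{-n-1})$, which is \eqref{cond1}--\eqref{cond2} together with the asserted values of $\alpha_n,\beta_n$ (for $n=0$ this reads $\alpha_0=1$, $\beta_0=c_0^{-1}a_{-1}$).

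For the converse, assume \eqref{cond1}--\eqref{cond2} and set $I_{ij}=\int_{-1}^1(Q_i^1,Q_i^2)\Psi(x)\binom{Q_j^1}{Q_j^2}\,dx$. Since $\Psi$ is real and symmetric, $I_{ij}=I_{ji}$, so we may assume the degrees of the entries of $(Q_i^1,Q_i^2)$ do not exceed those of $(Q_j^1,Q_j^2)$, both bounded by some $q\geq0$. If they are strictly smaller, expand $\binom{Q_i^1}{Q_i^2}$ in monomials of degree $\leq q-1$ and use the vanishing lines of \eqref{cond1}/\eqref{cond2} for the index $j$ (necessarily $q\geq1$ here) to obtain $I_{ij}=0=\delta_{ij}/\pi_j$. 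If both degrees equal $q$, the pair $\{i,j\}$ equals $\{q,-q-1\}$ (or $\{0,-1\}$); subtracting the degree-$\leq q-1$ part of the second factor, which is annihilated by \eqref{cond1}/\eqref{cond2} for the index $i$, leaves only the leading monomial, and \eqref{lead1}--\eqref{lead2} and \eqref{potcoeff} give $I_{mm}=R_m^1\alpha_m=1/\pi_m$, $I_{-m-1,-m-1}=L_m^2\beta_m=1/\pi_{-m-1}$, and for the mixed pair $I_{q,-q-1}=L_q^2\cdot\big(\text{second entry of }\int_{-1}^1(Q_q^1,Q_q^2)\Psi(x)\,x^q\,dx\big)=0$, the last equality using precisely that the degree-$n$ value in \eqref{cond1} has the form $(\alpha_n,0)$. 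Hence \eqref{ortoZ2} holds.

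The only genuine difficulty I foresee is the bookkeeping forced by the ``two-dimensional'' nature of the degree filtration: both $(Q_n^1,Q_n^2)$ and $(Q_{-n-1}^1,Q_{-n-1}^2)$ reach degree $n$, so one must track which of the two monomial directions each one contributes, and in the converse it is exactly the position of the single nonzero entry in the degree-$n$ lines of \eqref{cond1}--\eqref{cond2} --- first entry for the $Q_n$'s, second for the $Q_{-n-1}$'s --- that forces the mixed blocks $I_{q,-q-1}$ to vanish; a version of those conditions with merely ``nonzero'' top-degree values would not characterize \eqref{ortoZ2}. Everything else reduces to routine manipulations with \eqref{lead1}--\eqref{lead2} and \eqref{potcoeff}.
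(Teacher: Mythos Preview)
Your proof is correct and follows essentially the same approach as the paper. The paper's argument is more compressed: it passes through the $2\times2$ matrix orthogonality \eqref{ortoZ3}, invokes the standard characterization of matrix orthogonal polynomials with nonsingular leading coefficients against monomials, and then reads off the rows; your version unpacks this same reasoning directly at the level of row/column vectors via the basis claim, which makes the argument self-contained but is mathematically identical.
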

\begin{proof}
The orthogonality conditions \eqref{ortoZ2} are equivalent to the matrix orthogonality \eqref{ortoZ3}. Since $\bm Q_n(x)$ in \eqref{2QMM} is a matrix polynomial of degree $n$ with nonsingular leading coefficient the orthogonality is equivalent to $\int_{-1}^1\bm Q_n(x)\Psi(x) x^jdx=0_{2\times2}$ for $j=0,1,\ldots,n-1,$ where $0_{2\times2}$ denotes the $2\times2$ null matrix, and $\int_{-1}^1\bm Q_n(x)\Psi(x) x^ndx$ is a nonsingular (diagonal) matrix. Looking at the rows of these expressions we get \eqref{cond1} and \eqref{cond2}. The values of $\alpha_n$ and $\beta_n$ can be computed using \eqref{lead1}, \eqref{lead2} and \eqref{potcoeff}.
\end{proof}

Now that we have studied the spectral properties of the doubly infinite matrix $P$ in \eqref{QZ} let us study the spectral matrices associated with the Darboux transformations $\widetilde P$ in \eqref{QZULdar} and $\widehat P$ in \eqref{QZLUdar}. We will analyze both cases separately.

\subsection{Darboux transformation for the UL case}

Consider the discrete Darboux transformation $\widetilde P$ in \eqref{QZULdar} with probability coefficients $(\tilde{a}_n)_{n\in\ZZ}$, $(\tilde{b}_n)_{n\in\ZZ}$ and $(\tilde{c}_n)_{n\in\ZZ}$ given by \eqref{coeffDLU}. Before defining the corresponding polynomials associated with $\widetilde P$ let us introduce an auxiliary family of polynomials $S_n^\alpha(x)$ given by the relation $s^\alpha(x)=P_Lq^\alpha(x)$, where $q^\alpha(x)=(\cdots, Q_{-1}^\alpha(x),Q_0^\alpha(x),Q_1^\alpha(x),\cdots)^T,$ and $s^\alpha(x)=(\cdots, S_{-1}^\alpha(x), S_0^\alpha(x),S_1^\alpha(x),\cdots)^T, \alpha=1,2$, i.e. 
\begin{equation}\label{qenqhat}
S_n^\alpha(x)=s_n Q_n^\alpha (x)+ r_n Q_{n-1}^\alpha(x),\quad n \in \mathbb{Z}, \quad \alpha=1,2.
\end{equation}
From the UL factorization we also have $P_U s^\alpha(x)= xq^\alpha(x)$, that is
\begin{equation}\label{qen1}
xQ_n^\alpha (x)=x_n S_{n+1}^\alpha(x)+y_n S_n^\alpha(x), \quad n\in \mathbb{Z}, \quad\alpha=1,2.
\end{equation}
Evaluating \eqref{qen1} at $x=0$ we get recursively
\begin{equation}
\begin{split}\label{q0hat}
S_{n}^\alpha (0)&=(-1)^{n}\frac{y_0\dots y_{n-1}}{x_0 \dots x_{n-1}}S_0^\alpha (0), \quad n\ge 1,\\
S_{-n-1}^\alpha (0)&=(-1)^{n+1} \frac{x_{-1}\dots x_{-n-1}}{y_{-1}\dots y_{-n-1}} S_0^\alpha(0), \quad n\ge 0,
\end{split}
\end{equation}
where 
\begin{equation}\label{relacons}
S_0^\alpha(0)=\begin{cases} s_0, \quad \text{if}\quad \alpha=1, \\ r_0, \quad \text{if} \quad  \alpha=2. \end{cases}
\end{equation}
The equations \eqref{q0hat} establish a direct relation between the polynomials $(S_n^\alpha)_{n\in\ZZ}, \alpha=1,2,$ given by
\begin{equation} \label{relq0}
s_0 S_n^2(0)=r_0 S_n^1(0), \quad n\in \mathbb{Z}.
\end{equation}
Another useful relation follows using \eqref{qen1} and \eqref{q0hat} which gives the polynomials $(S_n^\alpha)_{n\in \mathbb{Z}}$ in terms of $(Q_n^\alpha)_{n\in \mathbb{Z}}$. Indeed, for $n\geq0,$
\begin{equation*} 
\begin{split}
S_{n+1}^\alpha(x)&=\frac{x}{x_n} Q_n^\alpha(x)-\frac{y_n}{x_n}S_n^\alpha (x)= \frac{x}{x_{n}}Q_n^\alpha(x) +\frac{S_{n+1}^\alpha(0)}{ S_{n}^\alpha(0)}\left[ \frac{x}{x_{n-1}} Q_{n-1}^\alpha(x)+\frac{S_{n}^\alpha(0)}{ S_{n-1}^\alpha(0)}S_{n-1}^\alpha (x) \right]\\
&= x\left[ \frac{Q_n^\alpha(x)}{x_n}+ \frac{S_{n+1}^\alpha(0)}{ S_{n}^\alpha(0)}  \frac{Q_{n-1}^\alpha(x)}{x_{n-1}} \right] + \frac{S_{n+1}^\alpha(0)}{ S_{n-1}^\alpha(0)} S_{n-1}^\alpha(x)= \cdots =x\sum_{j=0}^{n} \frac{S_{n+1}^\alpha(0)}{ S_{j+1}^\alpha(0)}\frac{Q_j^\alpha(x)}{x_j} +S_{n+1}^\alpha(0),
\end{split}
\end{equation*}
since $S_0^\alpha(x)$ is constant (see \eqref{relacons}). Therefore
\begin{equation} \label{qhatrec1}
\begin{split}
S_n^\alpha(x)&=S_{n}^\alpha(0) \left[ 1+x\sum_{j=0}^{n-1} \frac{Q_j^\alpha(x) }{ S_{j+1}^\alpha(0) x_j} \right], \quad n\ge 1.
\end{split}
\end{equation}
Similarly
\begin{equation} \label{qhatrec2}
\begin{split}
S_{-n-1}^\alpha(x)&=S_{-n-1}^\alpha(0) \left[ 1+x\sum_{j=0}^{n} \frac{Q_{-j-1}^\alpha(x) }{ S_{-j-1}^\alpha(0) y_{-j-1}} \right], \quad n\ge 0.
\end{split}
\end{equation}
Observe that this auxiliary family $(S_n^\alpha)_{n\in \mathbb{Z}}$ does not satisfy the same initial conditions as the family $(Q_n^\alpha)_{n\in \mathbb{Z}}$ since by \eqref{qenqhat} we have
\begin{equation*}
\begin{split}
S_0^1(x)=&s_0, \qquad\qquad S_0^2(x)=r_0,\\
S_{-1}^1(x)=&-\frac{x_{-1}s_0}{y_{-1}}, \quad S_{-1}^2(x)=\frac{x-x_{-1}r_0}{y_{-1}}.
\end{split}
\end{equation*}
The degrees of the polynomials $(S_n^\alpha)_{n\in \mathbb{Z}}$ are also not the same as the degrees of the polynomials $(Q_n^\alpha)_{n\in \mathbb{Z}}$, since
\begin{equation}\label{degqss}
\begin{split}
\deg(S_n^1)&=n, \quad  n\ge 0, \quad \deg(S_n^2)=n,\quad n\ge 0,\\
\deg(S_{-n-1}^1)&=n, \quad n\ge 0, \quad \deg(S_{-n-1}^2)=n+1,\quad n\ge 0.
\end{split}
\end{equation}
Therefore the family of matrix polynomials
\begin{equation*}\label{S00}
\bm S_n(x)=\begin{pmatrix} S_n^1(x)&S_n^2(x) \\ S_{-n-1}^1(x) & S_{-n-1}^2(x)\end{pmatrix}, \quad n\geq0,
\end{equation*}
has degree $n+1$ and singular leading coefficient. Now we will define a new family of polynomials which will turn out to be the associated family of the Darboux transformation $\widetilde P$. For $n\geq0$ define 
\begin{equation}\label{qtilde}
\widetilde{\bm Q}_n(x)=\bm S_n(x) \bm S_0^{-1}(x),\quad n\geq0,
\end{equation}
where
\begin{equation}\label{S000}
\bm S_0(x)=\begin{pmatrix} s_0& r_0 \\ -\D\frac{x_{-1}s_0}{y_{-1}}& \D\frac{x-x_{-1}r_0}{y_{-1}}
\end{pmatrix}.
\end{equation}
Following the same representation as in \eqref{2QMM} we can define the functions $(\widetilde Q_n^\alpha)_{n\in \mathbb{Z}}, \alpha=1,2$, which turn out to be polynomials, as the following proposition shows.
\begin{proposition}\label{proppol}
Let $\widetilde{\bm Q}_n(x), n\geq0,$ be the matrix function defined by \eqref{qtilde}. Then, for $n\geq0$, $\widetilde{\bm Q}_n(x)$ is a matrix polynomial of degree exactly $n$ with nonsingular leading coefficient and $\widetilde{\bm Q}_0(x)=I_{2\times2}$.
\end{proposition}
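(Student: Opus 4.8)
Since $\widetilde{\bm Q}_0(x)=\bm S_0(x)\bm S_0^{-1}(x)=I_{2\times2}$ trivially, the substance of the statement is the claim about degree and leading coefficient, and the plan is to show that the matrices $\widetilde{\bm Q}_n$ satisfy the block three-term recurrence of the Darboux transform $\widetilde P$ and then argue by induction. The key observation is that $s^\alpha(x)$ is a formal eigenvector of $\widetilde P$ with eigenvalue $x$: combining \eqref{qenqhat} (i.e. $s^\alpha=P_Lq^\alpha$) with \eqref{qen1} (i.e. $P_Us^\alpha=xq^\alpha$),
\[
\widetilde P\, s^\alpha(x)=P_LP_U\,s^\alpha(x)=P_L\bigl(xq^\alpha(x)\bigr)=x\,P_Lq^\alpha(x)=x\,s^\alpha(x),\qquad \alpha=1,2,
\]
which componentwise says that $xS_n^\alpha(x)=\tilde a_nS_{n+1}^\alpha(x)+\tilde b_nS_n^\alpha(x)+\tilde c_nS_{n-1}^\alpha(x)$ for every $n\in\ZZ$ and $\alpha=1,2$, with $\tilde a_n,\tilde b_n,\tilde c_n$ the coefficients of $\widetilde P$ given in \eqref{coeffDLU}.

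Next I would pass to the semi-infinite block picture via exactly the same relabeling $\{0,1,2,\dots\}\to\{0,2,4,\dots\}$, $\{-1,-2,\dots\}\to\{1,3,\dots\}$ that turns $P$ into $\bm P$. Because $\widetilde P$ is again a doubly infinite tridiagonal stochastic matrix, it is equivalent to a semi-infinite block tridiagonal matrix with $2\times2$ blocks $\widetilde A_n,\widetilde B_n,\widetilde C_n$ built from $(\tilde a_n),(\tilde b_n),(\tilde c_n)$ exactly as $A_n,B_n,C_n$ were built from $(a_n),(b_n),(c_n)$; in particular $\widetilde A_0=\diagonal(\tilde a_0,\tilde c_{-1})$, $\widetilde A_n=\diagonal(\tilde a_n,\tilde c_{-n-1})$ for $n\ge1$, and $\widetilde B_0=\left(\begin{smallmatrix}\tilde b_0&\tilde c_0\\ \tilde a_{-1}&\tilde b_{-1}\end{smallmatrix}\right)$. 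Each $\widetilde A_n$ is nonsingular, since by \eqref{coeffDLU} and the strict positivity $0<x_n,y_n,s_n,r_n<1$ we have $\tilde a_n=s_nx_n>0$ and $\tilde c_n=r_ny_{n-1}>0$. Grouping the scalar recurrence above at the two indices $n$ and $-n-1$ gives, for $\bm S_n(x)=\left(\begin{smallmatrix}S_n^1(x)&S_n^2(x)\\ S_{-n-1}^1(x)&S_{-n-1}^2(x)\end{smallmatrix}\right)$,
\[
x\bm S_n(x)=\widetilde A_n\bm S_{n+1}(x)+\widetilde B_n\bm S_n(x)+\widetilde C_n\bm S_{n-1}(x),\quad n\ge1,\qquad x\bm S_0(x)=\widetilde A_0\bm S_1(x)+\widetilde B_0\bm S_0(x).
\]

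Finally, $\det\bm S_0(x)=s_0x/y_{-1}\not\equiv0$, so $\bm S_0^{-1}(x)$ exists as a rational matrix function; right-multiplying the last display by it shows that $\widetilde{\bm Q}_n(x)=\bm S_n(x)\bm S_0^{-1}(x)$ satisfies the same recurrence with $\widetilde{\bm Q}_0=I_{2\times2}$, i.e. $\widetilde{\bm Q}_1=\widetilde A_0^{-1}(xI_{2\times2}-\widetilde B_0)$ and $\widetilde{\bm Q}_{n+1}=\widetilde A_n^{-1}\bigl((xI_{2\times2}-\widetilde B_n)\widetilde{\bm Q}_n-\widetilde C_n\widetilde{\bm Q}_{n-1}\bigr)$ for $n\ge1$. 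Now induct: $\widetilde{\bm Q}_0=I_{2\times2}$ and $\widetilde{\bm Q}_1$ are matrix polynomials of degrees $0$ and $1$ with leading coefficients $I_{2\times2}$ and $\widetilde A_0^{-1}$; assuming $\widetilde{\bm Q}_{n-1},\widetilde{\bm Q}_n$ are polynomials of degrees $n-1,n$ with nonsingular leading coefficients, the term $(xI_{2\times2}-\widetilde B_n)\widetilde{\bm Q}_n$ has degree $n+1$ with leading coefficient that of $\widetilde{\bm Q}_n$, while $\widetilde C_n\widetilde{\bm Q}_{n-1}$ has degree $n-1$, so $\widetilde{\bm Q}_{n+1}$ is a matrix polynomial of degree exactly $n+1$ with leading coefficient $\widetilde A_n^{-1}$ times that of $\widetilde{\bm Q}_n$, hence $(\widetilde A_n\widetilde A_{n-1}\cdots\widetilde A_0)^{-1}$, which is nonsingular. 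This gives the proposition.

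The only delicate point, and the main obstacle, is the bookkeeping in the second step: one must check that regrouping the doubly infinite scalar recurrence into $2\times2$ blocks reproduces precisely the block pattern of $\bm P$, in particular the ``gluing'' block $\widetilde B_0$ and the fact that $\bm S_0$ is adjacent only to $\bm S_1$ (so there is no $\widetilde C_0$ term). If one prefers to avoid the block recurrence altogether, the same conclusion can be reached by inverting $\bm S_0(x)$ explicitly and writing $\widetilde{\bm Q}_n(x)=\tfrac{1}{s_0x}\bm S_n(x)\left(\begin{smallmatrix}x-x_{-1}r_0&-r_0y_{-1}\\ x_{-1}s_0&s_0y_{-1}\end{smallmatrix}\right)$; the apparent pole at $x=0$ cancels because $\bm S_n(x)\,(-r_0,s_0)^{T}$ vanishes at $x=0$ by the relation \eqref{relq0}, the degree bounds \eqref{degqss} then give $\deg\widetilde{\bm Q}_n\le n$, and the leading coefficient is extracted from \eqref{lead1}--\eqref{lead2} together with the leading coefficients of the $S_n^\alpha$ read off from \eqref{qhatrec1}--\eqref{qhatrec2}. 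I would present the recurrence argument as the proof and keep the direct computation as a cross-check.
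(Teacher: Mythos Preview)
Your proof is correct and takes a genuinely different route from the paper's. The paper proceeds by direct computation: it inverts $\bm S_0(x)$ explicitly, writes each entry $\widetilde Q_n^\alpha(x)$ as a combination of $S_n^1(x)$ and $S_n^2(x)$ divided by $x$, uses the relation $s_0S_n^2(0)=r_0S_n^1(0)$ together with the expansions \eqref{qhatrec1}--\eqref{qhatrec2} to show the apparent pole at $x=0$ cancels, and then determines the degree of each of the four scalar entries one at a time---including a nontrivial leading-coefficient cancellation for $\widetilde Q_{-n-1}^1$. Your argument instead establishes that the $\bm S_n$ satisfy the block three-term recurrence of $\widetilde P$, right-multiplies by $\bm S_0^{-1}(x)$, and inducts; since $\widetilde A_n$ is a constant invertible matrix, polynomiality, degree, and nonsingularity of the leading coefficient follow immediately. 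What your approach buys is economy and a structural explanation: the three-term recurrence for $(\widetilde Q_n^\alpha)$, which the paper records only in a separate remark after the proposition, is here the engine of the proof, and you get the leading coefficient $(\widetilde A_n\cdots\widetilde A_0)^{-1}$ for free. What the paper's approach buys is the explicit entrywise formulas \eqref{qtilss} for $\widetilde Q_n^\alpha$ in terms of $S_n^\alpha$, and the entry-by-entry degree information. Your ``cross-check'' paragraph at the end is essentially the paper's argument; either route would be acceptable.
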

\begin{proof}
Computing the inverse of $\bm S_0(x)$ we have
$$
\bm S_0^{-1}(x)=\frac{1}{x}\begin{pmatrix} \D\frac{x-x_{-1}r_0}{s_{0}}& -\D\frac{y_{-1}r_0}{s_{0}} \\ x_{-1}& y_{-1}\end{pmatrix}.
$$
Observe that $|\bm S_0(x)|=\frac{xs_0}{y_{-1}}$, so the inverse is not well-defined at $x=0$. We will show that we can avoid this problem using the properties of the polynomials $(S_n^\alpha)_{n\in \mathbb{Z}}$. Indeed, from \eqref{qtilde} we have
\begin{equation}\label{qtilss}
\begin{split}
\widetilde Q_n^1(x)&=\frac{S_n^1(x)}{s_0}+\frac{x_{-1}}{xs_0}\left(s_0S_n^2(x)-r_0S_n^1(x)\right),\quad n\in\ZZ,\\
\widetilde Q_n^2(x)&=\frac{y_{-1}}{xs_0}\left(s_0S_n^2(x)-r_0S_n^1(x)\right),\quad n\in\ZZ.
\end{split}
\end{equation}
A straightforward computation using \eqref{qhatrec1}, \eqref{qhatrec2} and \eqref{relq0} gives
\begin{align*}
s_0S_0^2(x)-r_0S_0^1(x)&=0,\\
s_0S_n^2(x)-r_0S_n^1(x)&=x\sum_{j=0}^{n-1}\frac{1}{x_j}\left(s_0\frac{S_n^2(0)Q_j^2(x)}{S_{j+1}^2(0)}-r_0\frac{S_n^1(0)Q_j^1(x)}{S_{j+1}^1(0)}\right),\quad n\geq1,\\
s_0S_{-n-1}^2(x)-r_0S_{-n-1}^1(x)&=x\sum_{j=0}^{n}\frac{1}{y_{-j-1}}\left(s_0\frac{S_{-n-1}^2(0)Q_{-j-1}^2(x)}{S_{-j-1}^2(0)}-r_0\frac{S_{-n-1}^1(0)Q_{-j-1}^1(x)}{S_{-j-1}^1(0)}\right),\quad n\geq0.
\end{align*}
Therefore from these relations we can see that $(\widetilde Q_n^\alpha)_{n\in \mathbb{Z}}, \alpha=1,2$ are indeed polynomials. A close look to the degrees of $(Q_n^\alpha)_{n\in \mathbb{Z}}, \alpha=1,2,$ in \eqref{degqs} gives that $\deg\left((s_0S_n^2(x)-r_0S_n^1(x))/x\right)=n-1,n\geq1$. Therefore from \eqref{degqss} we get $\deg(\widetilde Q_n^1)=n, n\geq0$ and $\deg(\widetilde Q_n^2)=n-1,n\geq1$. On the other hand $\deg\left((s_0S_{-n-1}^2(x)-r_0S_{-n-1}^1(x))/x\right)=n,n\geq0$. Therefore $\deg(\widetilde Q_{-n-1}^2)=n, n\geq0$. Finally, $\widetilde Q_{-n-1}^1$ is in principle a polynomial of degree $n$, but we will see that in fact is a polynomial of degree $n-1$. Indeed, call $\Lambda_n$ the coefficient of $x^n$ in $\widetilde Q_{-n-1}^1$. Then, using \eqref{lead1}, \eqref{lead2}, \eqref{qenqhat} and \eqref{ULd}, we have
$$
\Lambda_n=\frac{1}{s_0}\left(-\frac{a_{-1}r_{-n-1}}{c_{-1}\cdots c_{-n-1}}+\frac{x_{-1}s_0}{y_{-n-1}c_{-1}\cdots c_{-n}}\right)=\frac{1}{s_0c_{-1}\cdots c_{-n}}\left(-\frac{a_{-1}r_{-n-1}}{c_{-n-1}}+\frac{x_{-1}s_0}{y_{-n-1}}\right)=0.
$$
Therefore $\deg(\widetilde Q_{-n-1}^1)=n-1, n\geq1$. The fact that $\widetilde{\bm Q}_0(x)=I_{2\times2}$ comes from the definition \eqref{qtilde}.
\end{proof}
\begin{remark}
The previous proposition shows that the \emph{Darboux polynomials} $(\widetilde Q_n^\alpha)_{n\in \mathbb{Z}}, \alpha=1,2,$ satisfy the same initial conditions and degree conditions than the original polynomials $(Q_n^\alpha)_{n\in \mathbb{Z}}, \alpha=1,2$. They also satisfy the three-term recurrence relation
\begin{equation}\label{TTRRZUL}
\begin{split}
\widetilde Q_0^1(x)&=1,\quad \widetilde Q_{0}^2(x)=0,\\
\widetilde Q_{-1}^1(x)&=0,\quad \widetilde Q_{-1}^2(x)=1,\\
x\widetilde Q_n^{\alpha}(x)&=\tilde a_n\widetilde Q_{n+1}^{\alpha}(x)+\tilde b_n\widetilde Q_n^{\alpha}(x)+\tilde c_n\widetilde Q_{n-1}^{\alpha}(x),\quad n\in\ZZ,\quad\alpha=1,2,
\end{split}
\end{equation}
where the Darboux coefficients $(\tilde{a}_n)_{n\in\ZZ}$, $(\tilde{b}_n)_{n\in\ZZ}$ and $(\tilde{c}_n)_{n\in\ZZ}$ are defined by \eqref{coeffDLU}. This is a consequence of writing the polynomials $(\widetilde Q_n^\alpha)_{n\in \mathbb{Z}}, \alpha=1,2,$ in terms of $(S_n^\alpha)_{n\in \mathbb{Z}}, \alpha=1,2,$ (see \eqref{qtilss}) and the fact that the polynomials $(S_n^\alpha)_{n\in \mathbb{Z}}, \alpha=1,2,$ satisfy the same three-term recurrence relation (with different initial conditions) by construction (see \eqref{qenqhat}).
\end{remark}

We will now show one the the main results of this paper, namely how to compute the spectral matrix associated with the Darboux random walk  $\widetilde P$ in \eqref{QZULdar} and prove that $(\widetilde Q_n^\alpha)_{n\in \mathbb{Z}}, \alpha=1,2,$ are the corresponding orthogonal polynomials. We define first the potential coefficients associated with $\widetilde P$ given by
\begin{equation}\label{cofpotUL}
\tilde \pi_0=1, \quad \tilde \pi_n=\frac{\tilde a_0 \cdots \tilde a_{n-1}}{\tilde c_1 \cdots \tilde c_{n}}, \quad \tilde \pi_{-n}=\frac{\tilde c_0 \cdots \tilde c_{-n+1}}{ \tilde a_{-1} \cdots \tilde a_{-n}}, \quad n\ge 1.
\end{equation}
\begin{theorem}\label{thmorto}
Let $\{X_t: t=0, 1, \dots\}$ be the random walk on $\ZZ$ with transition probability matrix $P$ given by \eqref{QZ} and $\{\widetilde X_t : t=0, 1, \dots\}$ the Darboux random walk on $\ZZ$ with transition probability matrix $\widetilde P$ given by \eqref{QZULdar}. Assume that $M_{-1}=\D\int_{-1}^{1} \D\frac{\Psi(x)}{x}dx$ is well-defined (entry by entry), where $\Psi(x)$ is the original spectral matrix (see \eqref{2spmt}). Then the polynomials $(\widetilde Q_n^\alpha)_{n\in \mathbb{Z}}, \alpha=1,2,$ defined by \eqref{TTRRZUL} (see also \eqref{qtilde}) are orthogonal with respect to the following spectral matrix 
\begin{equation}\label{esptil}
\widetilde \Psi (x)=\bm S_0(x) \Psi_S (x) \bm S_0^*(x),
\end{equation}
where $\bm S_0(x)$ is defined by \eqref{S000} and
\begin{equation}\label{esphat}
\Psi_S(x)=\frac{y_0}{s_0}\frac{\Psi(x)}{x}+ \left[ \begin{pmatrix}1/s_0&0 \\ 0 &1/r_0 \end{pmatrix}- \frac{y_0}{s_0}M_{-1}\right] \delta_0(x).
\end{equation}
Moreover, we have
\begin{equation}\label{qusnorms}
\int_{-1}^{1} \widetilde{\bm Q}_n (x) \widetilde \Psi (x) \widetilde{\bm Q}_m^*(x) dx=\begin{pmatrix} 1/\tilde \pi_n &0\\ 0& 1/\tilde \pi_{-n-1}\end{pmatrix} \delta_{n,m},
\end{equation}
where $(\tilde \pi_n)_{n\in \mathbb{Z}}$ are the potential coefficients defined by \eqref{cofpotUL}.
\end{theorem}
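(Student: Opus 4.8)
The plan is to verify the orthogonality relations \eqref{qusnorms} directly; orthogonality of $(\widetilde Q_n^\alpha)_{n\in\ZZ}$ with respect to \eqref{esptil} in the sense of \eqref{ortoZ2} is equivalent to them. Since by Proposition \ref{proppol} the matrix polynomial $\widetilde{\bm Q}_n$ has degree $n$ with nonsingular leading coefficient and $\widetilde{\bm Q}_0(x)=I_{2\times2}$, it suffices to show $\int_{-1}^{1}\widetilde{\bm Q}_n(x)\widetilde\Psi(x)x^j\,dx=0_{2\times2}$ for $0\le j\le n-1$ and to evaluate $\int_{-1}^{1}\widetilde{\bm Q}_n(x)\widetilde\Psi(x)x^n\,dx$; the cases $n\neq m$ of \eqref{qusnorms} and the norms then follow in the standard way. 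The key reduction is that the outer factors cancel: from $\widetilde{\bm Q}_n=\bm S_n\bm S_0^{-1}$, $\widetilde\Psi=\bm S_0\Psi_S\bm S_0^{*}$ and the polynomial identity $\widetilde{\bm Q}_n(x)\bm S_0(x)=\bm S_n(x)$ (which also disposes of the point mass of $\widetilde\Psi$ at $0$), one gets
\[
\int_{-1}^{1}\widetilde{\bm Q}_n(x)\,\widetilde\Psi(x)\,x^j\,dx=\int_{-1}^{1}\bm S_n(x)\,\Psi_S(x)\,\bm S_0^{*}(x)\,x^j\,dx .
\]
Writing $\Psi_S(x)=\tfrac{y_0}{s_0}\tfrac{\Psi(x)}{x}+C\,\delta_0(x)$ with $C=\mathrm{diag}(1/s_0,1/r_0)-\tfrac{y_0}{s_0}M_{-1}$, and $\bm S_0^{*}(x)=\bm S_0^{*}(0)+xE$ with $E=\mathrm{diag}(0,1/y_{-1})$, the right-hand side is expressed through the moments $\int\bm S_n\Psi x^k\,dx$ ($k\ge0$), the single matrix $\int\bm S_n\tfrac{\Psi}{x}\,dx$, and $\bm S_n(0)C\bm S_0^{*}(0)\,\delta_{j,0}$; the hypothesis that $M_{-1}$ is well-defined makes all of these finite.

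For the moments I would split each row of $\bm S_n$ according to \eqref{qenqhat}: $S_n^\alpha=s_nQ_n^\alpha+r_nQ_{n-1}^\alpha$ for the first row and $S_{-n-1}^\alpha=s_{-n-1}Q_{-n-1}^\alpha+r_{-n-1}Q_{-n-2}^\alpha$ for the second, and apply Lemma \ref{lemorto} to the original polynomials. This gives $\int\bm S_n\Psi x^k\,dx=0$ for $k\le n-2$, so for $1\le j\le n-1$ every surviving term vanishes and $\int\widetilde{\bm Q}_n\widetilde\Psi x^j\,dx=0$. For $j=n$ one obtains $\int\widetilde{\bm Q}_n\widetilde\Psi x^n\,dx$ in closed form in terms of $\alpha_{n-1}=c_1\cdots c_{n-1}$ and $\beta_n=c_0^{-1}a_{-1}\cdots a_{-n-1}$; moreover it is upper triangular, because the $(2,1)$-entries of $\int\bm S_n\Psi x^k\,dx$ vanish for $k\le n$ (the first components of $\int(Q_{-m-1}^1,Q_{-m-1}^2)\Psi x^k\,dx$ vanish by \eqref{cond2}).

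The delicate case is $j=0$ with $n\ge1$, where the factor $1/x$ cannot be removed, and this is where I expect the real work. Here I would use the explicit representations \eqref{qhatrec1}–\eqref{qhatrec2}: only the constant terms $S_n^\alpha(0)$, $S_{-n-1}^\alpha(0)$ feed the singular part of the integral, producing $\int\bm S_n\tfrac{\Psi}{x}\,dx=\bm S_n(0)M_{-1}+D_n$, where $D_n$ is an explicit diagonal matrix coming from the only terms ($Q_0^\alpha$, resp. $Q_{-1}^\alpha$) that survive the orthogonality of Lemma \ref{lemorto} in the remaining regular sums. The whole point of the mass $C$ in \eqref{esphat} is that $\tfrac{y_0}{s_0}\bm S_n(0)M_{-1}\bm S_0^{*}(0)$ cancels exactly the $-\tfrac{y_0}{s_0}M_{-1}$ part of $\bm S_n(0)C\bm S_0^{*}(0)$; what is left is $\tfrac{y_0}{s_0}D_n\bm S_0^{*}(0)+\bm S_n(0)\mathrm{diag}(1/s_0,1/r_0)\bm S_0^{*}(0)+\tfrac{y_0}{s_0}(\int\bm S_n\Psi\,dx)E$. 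By \eqref{q0hat}, \eqref{relacons} the matrices $\bm S_n(0)$, $\bm S_0^{*}(0)$ and $D_n$ are all of rank one, built out of the vectors $(s_0,r_0)^{T}$ and $(1,-x_{-1}/y_{-1})$; using $s_0+r_0=1$ together with the identities $a_{-1}=x_{-1}s_0$ and $c_0=y_0r_0$ from \eqref{ULd}, the first two terms cancel, and the third vanishes because $\int\bm S_n\Psi\,dx$ is supported in the $(1,1)$-entry while $E$ is supported in the $(2,2)$-entry. Hence $\int\widetilde{\bm Q}_n\widetilde\Psi\,dx=0$ for all $n\ge1$.

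Finally, for the norms: since $\int\widetilde{\bm Q}_n\widetilde\Psi x^j\,dx=0$ for $j<n$, one has $\int\widetilde{\bm Q}_n\widetilde\Psi\widetilde{\bm Q}_n^{*}\,dx=\big(\int\widetilde{\bm Q}_n\widetilde\Psi x^n\,dx\big)\widetilde R_n^{*}$, where $\widetilde R_n$ is the leading coefficient of $\widetilde{\bm Q}_n$, which is diagonal since $\deg\widetilde Q_n^2=n-1$ and $\deg\widetilde Q_{-n-1}^1=n-1$ (proof of Proposition \ref{proppol}); this product is upper triangular, and being Hermitian ($\widetilde\Psi=\widetilde\Psi^{*}$) it must be diagonal. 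Computing its two entries from the closed form of $\int\widetilde{\bm Q}_n\widetilde\Psi x^n\,dx$ and the leading coefficients in \eqref{lead1}–\eqref{lead2} and the proof of Proposition \ref{proppol}, and using $\tilde a_n=s_nx_n$, $\tilde c_n=r_ny_{n-1}$ from \eqref{coeffDLU}, $a_n=x_ns_{n+1}$, $c_n=y_nr_n$ from \eqref{ULd} together with \eqref{cofpotUL}, they simplify to $1/\tilde\pi_n$ and $1/\tilde\pi_{-n-1}$. The case $n=0$ is the same computation and gives $\mathrm{diag}(1,\tilde a_{-1}/\tilde c_0)=\mathrm{diag}(1/\tilde\pi_0,1/\tilde\pi_{-1})$, while $\int\widetilde{\bm Q}_n\widetilde\Psi\widetilde{\bm Q}_m^{*}\,dx=0$ for $n\neq m$ is immediate from testing against monomials of degree $<\max(n,m)$ and Hermitian symmetry.
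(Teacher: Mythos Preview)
Your proposal is correct and follows essentially the same strategy as the paper: reduce via $\widetilde{\bm Q}_n\bm S_0=\bm S_n$ to integrals of $\bm S_n$ against $\Psi_S\bm S_0^*$, handle $j\ge1$ by splitting $\bm S_n$ through \eqref{qenqhat} and invoking Lemma~\ref{lemorto}, and treat $j=0$ by the expansions \eqref{qhatrec1}--\eqref{qhatrec2} so that the $M_{-1}$ contribution cancels against the Dirac mass in \eqref{esphat}. The only real differences are organizational: you work with the $2\times2$ matrix $\bm S_n$ throughout, whereas the paper argues row by row with $(\widetilde Q_n^1,\widetilde Q_n^2)$ and $(\widetilde Q_{-n-1}^1,\widetilde Q_{-n-1}^2)$ separately; and for the norms you use the leading coefficient $\widetilde R_n$ together with the upper-triangular/Hermitian trick, while the paper instead substitutes $S_n=\tfrac{x}{y_n}Q_n-\tfrac{x_n}{y_n}S_{n+1}$ from \eqref{qen1} and reduces to $\|Q_n\|^2$ directly. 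Both routes are equivalent; yours is a bit more streamlined.

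One small imprecision to fix: you assert that $D_n$ is rank one. It is not---its diagonal entries $\tfrac{S_n^1(0)}{x_0S_1^1(0)}$ and $\tfrac{\beta_0 S_{-n-1}^2(0)}{y_{-1}S_{-1}^2(0)}$ are generically both nonzero. What \emph{is} rank one is $D_n\bm S_0^*(0)$, because $\bm S_0^*(0)=(s_0,r_0)^T(1,-x_{-1}/y_{-1})$ is rank one; the cancellation you describe (using $s_0+r_0=1$, $a_{-1}=x_{-1}s_0$, $c_0=y_0r_0$) then goes through exactly as you indicate, so this is a slip in wording rather than a gap in the argument.
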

\begin{proof}
Let $(Q_n^\alpha)_{n\in \mathbb{Z}}, \alpha=1,2,$ be the polynomials defined by \eqref{TTRRZ}, which are orthogonal with respect to the original spectral matrix $\Psi$. By Lemma \ref{lemorto} we have the orthogonality conditions \eqref{cond1} and \eqref{cond2}. Since $(\widetilde Q_n^\alpha)_{n\in \mathbb{Z}}, \alpha=1,2,$ satisfies the same initial and degree conditions than $(Q_n^\alpha)_{n\in \mathbb{Z}}, \alpha=1,2,$ we will use Lemma \ref{lemorto} to prove that $(\widetilde Q_n^\alpha)_{n\in \mathbb{Z}}, \alpha=1,2,$ are orthogonal with respect to $\widetilde \Psi (x)$ in \eqref{esptil}.

Assume first that $n\geq1$. Then we have, using \eqref{esptil}, \eqref{qtilde} and \eqref{esphat}, that
\begin{equation*}
\begin{split}
\int_{-1}^{1} \left(\widetilde Q_n^1(x), \widetilde Q_n^2(x)\right) \widetilde \Psi  (x)x^jdx  &= \int_{-1}^{1} \left(\widetilde Q_n^1(x), \widetilde Q_n^2(x)\right)  x\widetilde \Psi(x) x^{j-1}dx\\
&= \int_{-1}^{1} \left(\widetilde Q_n^1(x), \widetilde Q_n^2(x)\right) \bm S_0(x)  x\Psi_S(x)  \bm S_0^*(x) x^{j-1}dx\\
&=\frac{y_0}{s_0} \int_{-1}^{1} \left(S_n^1(x), S_n^2(x)\right) \bm S_0^{-1}(x) \bm S_0(x) \Psi(x)  \bm S_0^*(x) x^{j-1}dx \\
&=\frac{y_0}{s_0}  \int_{-1}^{1} \left(S_n^1(x), S_n^2(x)\right) \Psi(x)  \bm S_0^*(x) x^{j-1}dx. \\
\end{split}
\end{equation*}
Now, using \eqref{qenqhat}, the above expression can be written as
\begin{equation*}
\begin{split}
\int_{-1}^{1} \left(\widetilde Q_n^1(x), \widetilde Q_n^2(x)\right) \widetilde \Psi  (x)x^jdx&= \frac{s_n y_0}{s_0} \int_{-1}^{1} \left(Q_n^1(x), Q_n^2(x)\right) \Psi(x)  \bm S_0^*(x) x^{j-1}dx \\
&\quad+ \frac{r_ n y_0}{s_0}  \int_{-1}^{1} \left(Q_{n-1}^1(x), Q_{n-1}^2(x)\right) \Psi(x)  \bm S_0^*(x) x^{j-1}dx.
\end{split}
\end{equation*}
Writing $\bm S_0(x)=A+ xB$, where $A$ and $B$ are given by
\begin{equation} \label{q0ext}
A=\begin{pmatrix} s_0& r_0\\ -\D\frac{x_{-1}s_0}{y_{-1}} & -\D\frac{x_{-1}r_0}{y_{-1}}\end{pmatrix},\quad B=\begin{pmatrix} 0& 0\\ 0 & \D\frac{1}{y_{-1}}\end{pmatrix},
\end{equation}
the above expression can be written as
\begin{equation*}
\begin{split}
\int_{-1}^{1} \left(\widetilde Q_n^1(x), \widetilde Q_n^2(x)\right)& \widetilde \Psi  (x)x^jdx= \frac{s_n y_0}{s_0}\left[\int_{-1}^{1} \left(Q_n^1(x), Q_n^2(x)\right) \Psi(x)  A^* x^{j-1}dx+\int_{-1}^{1} \left(Q_n^1(x), Q_n^2(x)\right) \Psi(x)  B^* x^{j}dx\right]\\
&\quad+ \frac{r_ n y_0}{s_0}\left[\int_{-1}^{1} \left(Q_{n-1}^1(x), Q_{n-1}^2(x)\right) \Psi(x)  A^* x^{j-1}dx+\int_{-1}^{1} \left(Q_{n-1}^1(x), Q_{n-1}^2(x)\right) \Psi(x)  B^* x^{j}dx\right].
\end{split}
\end{equation*}
Using \eqref{cond1} we have that the first term of the sum vanishes for $j=1,\ldots,n$, the second term vanishes for $j=0,\ldots,n-1$, the third term vanishes for $j=1,\ldots,n-1$ and the fourth term vanishes for $j=0,\ldots,n-2$. Therefore the above expression vanishes for $j=1,\ldots,n-2$. For $j=n-1$ the only term that does not vanish is the fourth one. But in this case we have, using \eqref{cond1} and \eqref{q0ext}, that
\begin{equation}\label{Bbss}
\begin{split}
\int_{-1}^{1} \left(\widetilde Q_n^1(x), \widetilde Q_n^2(x)\right) \widetilde \Psi  (x)x^{n-1}dx&= \frac{r_ n y_0}{s_0}\int_{-1}^{1} \left(Q_{n-1}^1(x), Q_{n-1}^2(x)\right) \Psi(x)  B^* x^{n-1}dx\\
&=\frac{r_ n y_0}{s_0}  (\alpha_{n-1}, 0)  \begin{pmatrix} 0& 0\\ 0 & \D\frac{1}{y_{-1}}\end{pmatrix}=(0,0).
\end{split}
\end{equation}
For $j=0$ we have, using \eqref{qtilde} and \eqref{esptil}, that
\begin{equation*}
\begin{split}
\int_{-1}^{1} \left(\widetilde Q_n^1(x), \widetilde Q_n^2(x)\right) \widetilde \Psi  (x)dx&= \int_{-1}^{1} \left(S_n^1(x), S_n^2(x)\right) \bm S_0^{-1}(x) \bm S_0(x) \Psi_S(x)  \bm S_0^*(x)dx \\
&= \left[ \int_{-1}^{1} \left(S_n^1(x), S_n^2(x)\right) \Psi_S(x) dx \right] A^* + \left[ \int_{-1}^{1} \left(S_n^1(x), S_n^2(x)\right) x\Psi_S(x) dx \right] B^*.
\end{split}
\end{equation*}
The second term of the sum of the above expression vanishes as a consequence of \eqref{esphat}, \eqref{qenqhat} and \eqref{cond1}. Indeed, for $n\geq2,$ we have
\begin{equation*}
\begin{split}
 &\left[ \int_{-1}^{1} \left(S_n^1(x), S_n^2(x)\right)x\Psi_S(x) dx \right] B^*= 
 \left[\frac{y_0}{s_0} \int_{-1}^{1} \left(S_n^1(x), S_n^2(x)\right) \Psi(x) dx \right] B^*\\
 &\hspace{3cm}= \left[ \frac{s_ n y_0}{s_0}  \int_{-1}^{1} \left(Q_n^1(x), Q_n^2(x)\right)  \Psi(x)dx + \frac{r_n y_0}{s_0}  \int_{-1}^{1} \left(Q_{n-1}^1(x), Q_{n-1}^2(x)\right) \Psi(x)dx\right] B^*=(0,0).
\end{split}
\end{equation*}
For $n=1$ we can use the same argument as in \eqref{Bbss} and get again $(0,0)$. Now, using \eqref{qhatrec1}, we can write
\begin{equation*}
\left(S_n^1(x), S_n^2(x)\right)=x \sum_{j=0}^{n-1} \frac{1}{x_j} \left( \frac{S_n^1(0)}{S_{j+1}^1(0)} Q_j^1(x) , \frac{S_n^2(0)}{S_{j+1}^2(0)} Q_j^2(x)  \right)+ \left(S_n^1(0),S_n^2(0)\right).
\end{equation*} 
Substituting this in the remaining integral we get
\begin{align*}
\int_{-1}^{1} \left(\widetilde Q_n^1(x), \widetilde Q_n^2(x)\right)& \widetilde \Psi  (x)dx= \left[ \int_{-1}^{1} \left(S_n^1(x), S_n^2(x)\right) \Psi_S(x) dx \right] A^*\\
&= \left(S_n^1(0),S_n^2(0)\right)\left[ \begin{pmatrix} 1/s_0 &0 \\ 0 & 1/r_0\end{pmatrix} -\frac{y_0}{s_0}M_{-1}\right] A^* \\
 & \hspace{.5cm} +\frac{y_0}{s_0} \sum_{j=0}^{n-1} \frac{1}{x_j} \int_{-1}^{1} \left( \frac{S_n^1(0)}{S_{j+1}^1(0)} Q_j^1(x) , \frac{S_n^2(0)}{S_{j+1}^2(0)} Q_j^2(x)  \right) \Psi (x)A^*dx+\frac{y_0}{s_0}\left(S_n^1(0),S_n^2(0)\right) M_{-1} A^*\\
&=\left(S_n^1(0),S_n^2(0)\right) \begin{pmatrix} 1/s_0 &0 \\ 0 & 1/r_0\end{pmatrix} A^*+\frac{y_0}{s_0 x_0}\frac{S_n^1(0)}{S_{1}^1(0)} \int_{-1}^1\left(Q_0^1(x),Q_0^2(x)\right)\Psi(x)A^*dx.
\end{align*}
The third step is a consequence of $\D\frac{S_n^1(0)}{S_{j+1}^1(0)}=\frac{S_n^2(0)}{S_{j+1}^2(0)}$ using \eqref{relq0} and the orthogonality properties. Since $\D\int_{-1}^1\left(Q_0^1(x),Q_0^2(x)\right)\Psi(x)dx=(1,0)$, $S_{1}^1(0)=-s_0y_0/x_0$, and $A$ is given by \eqref{q0ext} then we have
\begin{equation*}
\begin{split}
\int_{-1}^{1} \left(\widetilde Q_n^1(x), \widetilde Q_n^2(x)\right)& \widetilde \Psi  (x)dx=\left(S_n^1(0),S_n^2(0)\right) \begin{pmatrix} 1/s_0 &0 \\ 0 & 1/r_0\end{pmatrix} A^*-\frac{1}{s_0^2}\left(S_n^1(0),0\right)A^*\\
&=\left( (1-1/s_0)\frac{S_n^1(0)}{s_0} , \frac{S_n^2(0)}{r_0}  \right) \begin{pmatrix} s_0& -\D\frac{x_{-1}s_0}{y_{-1}}\\ r_0 & -\D\frac{x_{-1}r_0}{y_{-1}}\end{pmatrix}\\
&=\left( (-r_0/s_0)S_n^1(0)+S_n^2(0), -\frac{x_{-1}}{y_{-1}} \left( (-r_0/s_0)S_n^1(0) +S_n^2(0) \right) \right)=(0,0),
\end{split}
\end{equation*}
as a consequence of \eqref{relq0}.

For $n\leq-1$ the proof is similar but now using \eqref{cond2} and \eqref{qhatrec2}. Therefore we have proved \eqref{qusnorms} for $n\neq m$. Observe that this implies in particular that the family of vector-valued polynomials $\left(S_n^1(x),S_n^2(x)\right), n\in\ZZ,$ is also orthogonal (for $n\neq m$) with respect to the weight matrix $\Psi_S(x)$ in \eqref{esphat}. For $n=m$, using this fact and \eqref{qtilde}, \eqref{qen1}, \eqref{qenqhat}, \eqref{esptil} and \eqref{esphat}, we have that
\begin{equation*}
\begin{split}
\int_{-1}^{1} &\left(\widetilde Q_n^1(x), \widetilde Q_n^2(x)\right) \widetilde \Psi(x)   \left(\widetilde Q_n^1(x) , \widetilde Q_n^2(x)\right)^*dx= \int_{-1}^{1}   \left(S_n^1(x),S_n^2(x)\right) \Psi_S(x)  \left(S_n^1(x), S_n^2(x)\right) ^* dx\\
&= \int_{-1}^{1}   \left[ \frac{x}{y_n}\left( Q_n^1(x), Q_n^2(x)\right)-\frac{ x_n}{ y_n} \left(S_{n+1}^1(x), S_{n+1}^2(x)\right) \right] \Psi_S(x)  \left(S_n^1(x), S_n^2(x)\right)^*dx\\
&= \frac{1}{y_n}\int_{-1}^{1}   x \left( Q_n^1(x), Q_n^2(x)\right) \Psi_S(x)  \left(S_n^1(x),S_n^2(x)\right)^*dx - \frac{x_n}{y_n} \int_{-1}^{1}   \left(S_{n+1}^1(x) , S_{n+1}^2(x)\right)\Psi_S(x)  \left(S_n^1(x) , S_n^2(x)\right)^*dx \\
&= \frac{y_0}{y_n s_0}\int_{-1}^{1}    \left( Q_n^1(x) , Q_n^2(x)\right)\Psi(x)\left[  s_n\left( Q_n^1(x) ,  Q_n^2(x)\right)^* + r_n\left( Q_{n-1}^1(x) ,  Q_{n-1}^2(x)\right)^* \right] \\
&=  \frac{ s_n y_0}{y_n s_0 }\int_{-1}^{1}    \left( Q_n^1(x) , Q_n^2(x)\right) \Psi(x) \left( Q_n^1(x) ,  Q_n^2(x)\right)^*dx + \frac{ r_n y_0}{y_n s_0}\int_{-1}^{1}    \left( Q_n^1(x) , Q_n^2(x)\right) \Psi(x) \left( Q_{n-1}^1(x) ,  Q_{n-1}^2(x)\right)^* dx \\
&= \frac{ s_n y_0}{y_n s_0 }\int_{-1}^{1}    \left( Q_n^1(x) , Q_n^2(x)\right)\Psi(x) \left( Q_n^1(x) ,  Q_n^2(x)\right)^*dx =\frac{ s_n y_0}{y_n s_0 } \frac {1}{\pi _n}= \frac{1}{\tilde \pi_n}.
\end{split}
\end{equation*}
The last step follows using \eqref{ULd}, \eqref{coeffDLU} and the definition of $\pi_n$ and $\tilde \pi_n$ in \eqref{potcoeff} and  \eqref{cofpotUL}, respectively.
\end{proof}
\begin{remark}
The spectral matrix $\widetilde\Psi$ associated with the Darboux transformation given in the previous theorem is a \emph{conjugation} by a matrix polynomial of degree 1 (namely $\bm S_0(x)$) of a \emph{Geronimus transformation} of the original spectral matrix $\Psi$. This phenomenon was already present, except for the conjugation, in the case of the Darboux transformation of transition probability matrices on $\ZZ_{\geq0}$ for the UL factorization (see \cite{GdI3}). 
\end{remark}

\begin{remark}
In this paper we have only considered doubly infinite stochastic Jacobi matrices of the form \eqref{QZ}, i.e. all entries are nonnegative and $a_n+b_n+c_n=1,n\in\ZZ$. Certainly the same approach can be used to more general doubly infinite Jacobi matrices as long as we are in the conditions of Corollary 2.2 of \cite{MR}, i.e. to guarantee that the Jacobi matrix is self-adjoint.  The spectral matrix associated with the Darboux transformation will be similar, up to constants, to the one given in Theorem \ref{thmorto}.
\end{remark}

\subsection{Darboux transformation for the LU case}

Consider now the discrete Darboux transformation $\widehat P$ in \eqref{QZLUdar} with probability coefficients $(\hat{a}_n)_{n\in\ZZ}$, $(\hat{b}_n)_{n\in\ZZ}$ and $(\hat{c}_n)_{n\in\ZZ}$ given by \eqref{coeffDUL}. We will see that this case is similar to the one in the previous subsection, so we will only give the important formulas necessary to prove the main results. As in the UL case we need to introduce the auxiliary family of polynomials $T_n^\alpha(x)$ given by the relation $t^\alpha(x)=\widetilde P_Uq^\alpha(x)$, where $q^\alpha(x)=(\cdots,Q_{-1}^\alpha(x),Q_0^\alpha(x),Q_1^\alpha(x),\cdots)^T,$ and $t^\alpha(x)=(\cdots, T_{-1}^\alpha(x), T_0^\alpha(x),T_1^\alpha(x),\cdots)^T, \alpha=1,2$, i.e. 
\begin{equation}\label{qenqhat2}
T_n^\alpha(x)=\tilde y_n Q_n^\alpha (x)+ \tilde x_n Q_{n+1}^\alpha(x),\quad n \in \mathbb{Z}, \quad \alpha=1,2.
\end{equation}
From the LU factorization we also have $\widetilde P_L t^\alpha(x)= xq^\alpha(x)$, that is
\begin{equation}\label{qen2}
xQ_n^\alpha (x)=\tilde r_n T_{n-1}^\alpha(x)+\tilde s_n T_n^\alpha(x), \quad n\in \mathbb{Z}, \quad\alpha=1,2.
\end{equation}
Evaluating \eqref{qen2} at $x=0$ we get recursively
\begin{equation}
\begin{split}\label{q0hat2}
T_{n}^\alpha (0)&=(-1)^{n+1}\frac{\tilde r_0\dots\tilde r_{n}}{\tilde s_0 \dots \tilde s_{n}}T_{-1}^\alpha (0), \quad n\ge 1,\\
T_{-n-1}^\alpha (0)&=(-1)^{n} \frac{\tilde s_{-1}\dots \tilde s_{-n}}{\tilde r_{-1}\dots \tilde r_{-n}} T_{-1}^\alpha(0), \quad n\ge 0,
\end{split}
\end{equation}
where 
\begin{equation*}\label{relacons2}
T_{-1}^\alpha(0)=\begin{cases} \tilde x_{-1}, \quad \text{if}\quad \alpha=1, \\ \tilde y_{-1}, \quad \text{if} \quad  \alpha=2. \end{cases}
\end{equation*}
The equations \eqref{q0hat2} establish a direct relation between the polynomials $(T_n^\alpha)_{n\in\ZZ}, \alpha=1,2,$ given by
\begin{equation} \label{relq02}
\tilde x_{-1} T_n^2(0)=\tilde y_{-1} T_n^1(0), \quad n\in \mathbb{Z}.
\end{equation}
Again, the polynomials $(T_n^\alpha)_{n\in \mathbb{Z}}$ can be written in terms of the polynomials $(Q_n^\alpha)_{n\in \mathbb{Z}}$ as follows
\begin{equation} \label{qhatrec3}
\begin{split}
T_n^\alpha(x)&=T_{n}^\alpha(0) \left[ 1+x\sum_{j=0}^{n} \frac{Q_j^\alpha(x) }{T_{j}^\alpha(0) \tilde s_j} \right], \quad n\ge 1,\\
T_{-n-1}^\alpha(x)&=T_{-n-1}^\alpha(0) \left[ 1+x\sum_{j=0}^{n-1} \frac{Q_{-j-1}^\alpha(x) }{T_{-j-2}^\alpha(0) \tilde r_{-j-1}} \right], \quad n\ge 0.
\end{split}
\end{equation}
This auxiliary family $(T_n^\alpha)_{n\in \mathbb{Z}}$ does not satisfy the same initial conditions as the family $(Q_n^\alpha)_{n\in \mathbb{Z}}$. In fact we have
\begin{equation*}
\begin{split}
T_0^1(x)=\frac{x-\tilde r_0\tilde x_{-1}}{\tilde s_{0}},& \hspace{0.5cm} T_0^2(x)=-\frac{\tilde r_0\tilde y_{-1}}{\tilde s_{0}},\\
T_{-1}^1(x)=\tilde x_{-1},&\hspace{0.5cm}  T_{-1}^2(x)=\tilde y_{-1}.
\end{split}
\end{equation*}
The degrees of the polynomials $(T_n^\alpha)_{n\in \mathbb{Z}}$ are
\begin{equation*}\label{degqss2}
\begin{split}
\deg(T_n^1)&=n+1, \quad  n\ge 0, \quad \deg(T_n^2)=n,\quad n\ge 0,\\
\deg(T_{-n-1}^1)&=n-1, \quad n\ge 0, \quad \deg(T_{-n-1}^2)=n,\quad n\ge 0.
\end{split}
\end{equation*}
Therefore the family of matrix polynomials
\begin{equation*}\label{S002}
\bm T_n(x)=\begin{pmatrix} T_n^1(x)&T_n^2(x) \\ T_{-n-1}^1(x) & T_{-n-1}^2(x)\end{pmatrix}, \quad n\geq0,
\end{equation*}
has degree $n+1$ and singular leading coefficient. Now we will define a new family of polynomials which will turn out to be the associated family of the Darboux transformation $\widehat P$. For $n\geq0$ define 
\begin{equation}\label{qtilde2}
\widehat{\bm Q}_n(x)=\bm T_n(x) \bm T_0^{-1}(x),\quad n\geq0,
\end{equation}
where
\begin{equation}\label{S0002}
\bm T_0(x)=\begin{pmatrix} \D\frac{x-\tilde r_0\tilde x_{-1}}{\tilde s_{0}}& \D-\frac{\tilde r_0\tilde y_{-1}}{\tilde s_{0}} \\ \tilde x_{-1}&\tilde y_{-1}
\end{pmatrix}.
\end{equation}
Following the same representation as in \eqref{2QMM} we can define the functions $(\widehat Q_n^\alpha)_{n\in \mathbb{Z}}, \alpha=1,2$, which turn out to be polynomials, as the following proposition shows.

\begin{proposition}
Let $\widehat{\bm Q}_n(x), n\geq0,$ be the matrix function defined by \eqref{qtilde2}. Then, for $n\geq0$, $\widehat{\bm Q}_n(x)$ is a matrix polynomial of degree exactly $n$ with nonsingular leading coefficient and $\widehat{\bm Q}_0(x)=I_{2\times2}$.
\end{proposition}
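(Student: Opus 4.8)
The plan is to transcribe the proof of Proposition \ref{proppol}, with the roles of the two rows and columns essentially interchanged. First I would compute the inverse of $\bm T_0(x)$ in \eqref{S0002}: since $\det\bm T_0(x)=x\tilde y_{-1}/\tilde s_0$, one gets
\[
\bm T_0^{-1}(x)=\frac{1}{x}\begin{pmatrix} \tilde s_0 & \tilde r_0 \\ -\D\frac{\tilde s_0\tilde x_{-1}}{\tilde y_{-1}} & \D\frac{x-\tilde r_0\tilde x_{-1}}{\tilde y_{-1}}\end{pmatrix},
\]
so that a priori the only singularity of $\widehat{\bm Q}_n(x)=\bm T_n(x)\bm T_0^{-1}(x)$ is a simple pole at $x=0$. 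Multiplying out, the four scalar functions entering $\widehat{\bm Q}_n$ are
\[
\widehat Q_n^1(x)=\frac{\tilde s_0}{x\tilde y_{-1}}\bigl(\tilde y_{-1}T_n^1(x)-\tilde x_{-1}T_n^2(x)\bigr),\qquad \widehat Q_n^2(x)=\frac{\tilde r_0}{\tilde s_0}\,\widehat Q_n^1(x)+\frac{T_n^2(x)}{\tilde y_{-1}},
\]
together with the same two formulas with the index $n$ replaced by $-n-1$.

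The second step is to show that the pole at $x=0$ is removable. By \eqref{relq02} we have $\tilde y_{-1}T_m^1(0)-\tilde x_{-1}T_m^2(0)=0$ for every $m\in\ZZ$, so the polynomial $\tilde y_{-1}T_m^1(x)-\tilde x_{-1}T_m^2(x)$ is divisible by $x$; hence $\widehat Q_n^1$ and $\widehat Q_{-n-1}^1$ are polynomials, and then $\widehat Q_n^2$ and $\widehat Q_{-n-1}^2$ are polynomials as well, being sums of polynomials. If an explicit form of the quotient is wanted, substituting \eqref{qhatrec3} and using \eqref{relq02} gives $\tilde y_{-1}T_n^1(x)-\tilde x_{-1}T_n^2(x)=x\sum_{j=0}^{n}\tilde s_j^{-1}\frac{T_n^1(0)}{T_j^1(0)}\bigl(\tilde y_{-1}Q_j^1(x)-\tilde x_{-1}Q_j^2(x)\bigr)$, and analogously for the index $-n-1$.

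The third step is the degree count, using \eqref{degqs}, \eqref{lead1}, \eqref{lead2} and the degree list for $(T_n^\alpha)_{n\in\ZZ}$ established earlier. Since $\deg T_n^1=n+1$ and $\deg T_n^2=n$, the numerator of $\widehat Q_n^1$ has degree $n+1$, so $\deg\widehat Q_n^1=n$ with leading coefficient $\tilde s_0\tilde x_n(a_0\cdots a_n)^{-1}\neq0$; since $\deg T_{-n-1}^1=n-1$ and $\deg T_{-n-1}^2=n$, we obtain $\deg\widehat Q_{-n-1}^1=n-1$ and $\deg\widehat Q_{-n-1}^2=n$ with leading coefficient $\tilde y_{-n-1}\bigl(\tilde y_{-1}c_{-1}\cdots c_{-n}\bigr)^{-1}\neq0$. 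The only delicate point is $\widehat Q_n^2$, which is a priori of degree $n$: computing the coefficient of $x^n$ from the displayed formula yields $\tilde x_n(a_0\cdots a_n)^{-1}\bigl(\tilde r_0-c_0/\tilde y_{-1}\bigr)$, which vanishes because $c_0=\tilde r_0\tilde y_{-1}$ by \eqref{LUd}; hence $\deg\widehat Q_n^2=n-1$. This cancellation is the exact analogue of the identity $\Lambda_n=0$ in the proof of Proposition \ref{proppol}. Therefore the coefficient of $x^n$ in $\widehat{\bm Q}_n(x)$ is the diagonal matrix $\diagonal\bigl(\tilde s_0\tilde x_n(a_0\cdots a_n)^{-1},\,\tilde y_{-n-1}(\tilde y_{-1}c_{-1}\cdots c_{-n})^{-1}\bigr)$, which is nonsingular, and all entries of $\widehat{\bm Q}_n$ have degree $\leq n$; so $\widehat{\bm Q}_n$ has degree exactly $n$ with nonsingular leading coefficient. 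Finally, $\widehat{\bm Q}_0(x)=\bm T_0(x)\bm T_0^{-1}(x)=I_{2\times2}$ directly from \eqref{qtilde2}.

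I expect the main obstacle to be purely the degree bookkeeping — in particular spotting that the a priori degree-$n$ contribution to $\widehat Q_n^2$ is annihilated by the factorization identity $c_0=\tilde r_0\tilde y_{-1}$; the removal of the pole at $x=0$ and the rest of the count are a routine adaptation of the UL argument.
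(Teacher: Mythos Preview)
Your proposal is correct and follows essentially the same approach as the paper's proof, which itself merely records $\bm T_0^{-1}(x)$ and the formulas for $\widehat Q_n^\alpha$ and then says ``the proof follows the same lines as in the proof of Proposition \ref{proppol} but now using \eqref{relq02} and \eqref{qhatrec3}.'' Your write-up is in fact more explicit than the paper's: you correctly locate the single nontrivial cancellation in the $(1,2)$ entry $\widehat Q_n^2$ (mirroring the $(2,1)$ cancellation $\Lambda_n=0$ in the UL case) and identify the factorization identity $c_0=\tilde r_0\tilde y_{-1}$ from \eqref{LUd} as the reason it vanishes.
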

\begin{proof}
Now, since
$$
\bm T_{0}^{-1}(x)=\frac{1}{x}\begin{pmatrix} \tilde s_0 & \tilde r_0 \\ -\D\frac{\tilde s_{0}\tilde x_{-1}}{\tilde y_{-1}}& \D\frac{x-\tilde r_0\tilde x_{-1}}{\tilde y_{-1}}\end{pmatrix},
$$
we have from \eqref{qtilde2}
\begin{equation*}\label{qtilss2}
\begin{split}
\widehat Q_n^1(x)&=\frac{\tilde s_{0}}{x\tilde y_{-1}}\left(\tilde y_{-1}T_n^1(x)-\tilde x_{-1}T_n^2(x)\right),\quad n\in\ZZ,\\
\widehat Q_n^2(x)&=\frac{T_n^2(x)}{\tilde y_{-1}}+\frac{\tilde r_{0}}{x\tilde y_{-1}}\left(\tilde y_{-1}T_n^1(x)-\tilde x_{-1}T_n^2(x)\right),\quad n\in\ZZ.
\end{split}
\end{equation*}
From here the proof follows the same lines as in the proof of Proposition \ref{proppol} but now using \eqref{relq02} and \eqref{qhatrec3}.
\end{proof}
Again these new \emph{Darboux polynomials} $(\widehat Q_n^\alpha)_{n\in \mathbb{Z}}, \alpha=1,2,$ satisfy the same initial conditions and degree conditions than the original polynomials $(Q_n^\alpha)_{n\in \mathbb{Z}}, \alpha=1,2,$ as well as a three-term recurrence relation of the form \eqref{TTRRZUL} but with Darboux coefficients $(\hat{a}_n)_{n\in\ZZ}$, $(\hat{b}_n)_{n\in\ZZ}$ and $(\hat{c}_n)_{n\in\ZZ}$ given by \eqref{coeffDUL}.

Let us now prove the analogue of Theorem \ref{thmorto} for the spectral matrix for the polynomials $(\widehat Q_n^\alpha)_{n\in \mathbb{Z}}, \alpha=1,2$. We define first the potential coefficients associated with $\widehat P$ given by
\begin{equation}\label{cofpotUL2}
\hat \pi_0=1, \quad \hat \pi_n=\frac{\hat a_0 \cdots \hat a_{n-1}}{\hat c_1 \cdots \hat c_{n}}, \quad \hat \pi_{-n}=\frac{\hat c_0 \cdots \hat c_{-n+1}}{ \hat a_{-1} \cdots \hat a_{-n}}, \quad n\ge 1.
\end{equation}
\begin{theorem}\label{thmorto2}
Let $\{X_t : t=0, 1, \dots\}$ be the random walk on $\ZZ$ with transition probability matrix $P$ given by \eqref{QZ} and $\{\widehat X_t : t=0, 1, \dots\}$ the Darboux random walk on $\ZZ$ with transition probability matrix $\widehat P$ given by \eqref{QZLUdar}. Assume that $M_{-1}=\D\int_{-1}^{1} \D\frac{\Psi(x)}{x}dx$ is well-defined (entry by entry), where $\Psi(x)$ is the original spectral matrix (see \eqref{2spmt}). Then the polynomials $(\widehat Q_n^\alpha)_{n\in \mathbb{Z}}, \alpha=1,2,$ defined by \eqref{qtilde2} are orthogonal with respect to the following spectral matrix 
\begin{equation}\label{esptil2}
\widehat \Psi (x)=\bm T_0(x) \Psi_T (x) \bm T_0^*(x),
\end{equation}
where $\bm T_0(x)$ is defined by \eqref{S0002} and
\begin{equation}\label{esphat2}
\Psi_T(x)=\frac{\tilde s_0}{\tilde y_0}\frac{\Psi(x)}{x}+ \left[\frac{\hat{a}_{-1}}{\hat{c}_0}\begin{pmatrix}1/\tilde x_{-1}&0 \\ 0 &1/\tilde y_{-1} \end{pmatrix}- \frac{\tilde s_0}{\tilde y_0}M_{-1}\right] \delta_0(x).
\end{equation}
Moreover, we have
\begin{equation*}\label{qusnorms2}
\int_{-1}^{1} \widehat{\bm Q}_n (x) \widehat \Psi (x) \widehat{\bm Q}_m^*(x) dx=\begin{pmatrix} 1/\hat \pi_n &0\\ 0& 1/\hat \pi_{-n-1}\end{pmatrix} \delta_{n,m},
\end{equation*}
where $(\hat \pi_n)_{n\in \mathbb{Z}}$ are the potential coefficients defined by \eqref{cofpotUL2}.
\end{theorem}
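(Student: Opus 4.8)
The plan is to repeat the argument of the proof of Theorem \ref{thmorto}, now with the auxiliary family $(T_n^\alpha)_{n\in\ZZ}$ in the role of $(S_n^\alpha)_{n\in\ZZ}$ and the degree-one matrix polynomial $\bm T_0(x)$ in the role of $\bm S_0(x)$. By the preceding proposition the Darboux polynomials $(\widehat Q_n^\alpha)_{n\in\ZZ}$, $\alpha=1,2$, have the same initial values and the same degrees as $(Q_n^\alpha)_{n\in\ZZ}$, so by Lemma \ref{lemorto} it suffices to verify that the vector-valued polynomials $\left(\widehat Q_n^1(x),\widehat Q_n^2(x)\right)$, $n\in\ZZ$, satisfy the monomial relations \eqref{cond1} and \eqref{cond2} with $\Psi$ replaced by $\widehat\Psi$, and then to compute the diagonal normalizations for $n=m$.

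For $n\ge1$ and $j\ge1$ I would first factor out one power of $x$. Since $x\,\delta_0(x)=0$, \eqref{esphat2} gives $x\Psi_T(x)=\frac{\tilde s_0}{\tilde y_0}\Psi(x)$, and because $\widehat{\bm Q}_n(x)\bm T_0(x)=\bm T_n(x)$, that is $\left(\widehat Q_n^1,\widehat Q_n^2\right)\bm T_0(x)=\left(T_n^1,T_n^2\right)$, the integral $\int_{-1}^1\left(\widehat Q_n^1,\widehat Q_n^2\right)\widehat\Psi(x)x^jdx$ equals $\frac{\tilde s_0}{\tilde y_0}\int_{-1}^1\left(T_n^1,T_n^2\right)\Psi(x)\bm T_0^*(x)x^{j-1}dx$. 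Now I would expand $\left(T_n^1,T_n^2\right)=\tilde y_n\left(Q_n^1,Q_n^2\right)+\tilde x_n\left(Q_{n+1}^1,Q_{n+1}^2\right)$ by \eqref{qenqhat2} and split $\bm T_0(x)=A+xB$ with the constant matrices
\begin{equation*}
A=\begin{pmatrix} -\D\frac{\tilde r_0\tilde x_{-1}}{\tilde s_0} & -\D\frac{\tilde r_0\tilde y_{-1}}{\tilde s_0}\\ \tilde x_{-1} & \tilde y_{-1}\end{pmatrix},\qquad B=\begin{pmatrix} \D\frac1{\tilde s_0} & 0\\ 0 & 0\end{pmatrix}.
\end{equation*}
Counting the vanishing orders of the four resulting integrals with \eqref{cond1}, the whole expression vanishes for $j=1,\dots,n-1$, while at $j=n$ only the term coming from $\left(Q_n^1,Q_n^2\right)\Psi(x)B^*x^n$ survives and equals $(\tilde y_n\alpha_n/\tilde y_0,0)\ne(0,0)$, which is exactly the form required in \eqref{cond1}. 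The range $n\le-1$ is identical, using \eqref{cond2} and the relation $T_{-n-1}^\alpha=\tilde y_{-n-1}Q_{-n-1}^\alpha+\tilde x_{-n-1}Q_{-n}^\alpha$.

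The delicate case, and the one I expect to require the most careful bookkeeping, is $j=0$; this is where the constant $\hat a_{-1}/\hat c_0$ in \eqref{esphat2} is forced. Here $x$ cannot be factored out, so I would write $\int_{-1}^1\left(T_n^1,T_n^2\right)\Psi_T(x)\bm T_0^*(x)dx=\big[\int_{-1}^1\left(T_n^1,T_n^2\right)\Psi_T dx\big]A^*+\big[\int_{-1}^1\left(T_n^1,T_n^2\right)x\Psi_T dx\big]B^*$; the $B^*$-term vanishes because $x\Psi_T=\frac{\tilde s_0}{\tilde y_0}\Psi$ and $\int_{-1}^1\left(Q_k^1,Q_k^2\right)\Psi dx=(0,0)$ for $k\ge1$. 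For the $A^*$-term I would insert \eqref{esphat2} and expand $T_n^\alpha(x)/x$ through \eqref{qhatrec3}: the $M_{-1}$ contributions cancel, the identity $T_n^1(0)/T_j^1(0)=T_n^2(0)/T_j^2(0)$ coming from \eqref{relq02}, together with $\int_{-1}^1\left(Q_j^1,Q_j^2\right)\Psi dx=\delta_{j,0}(1,0)$, collapses the remaining sum to its $j=0$ term, and then the relations $\hat a_{-1}=\tilde x_{-1}\tilde s_0$ and $\hat c_0=\tilde y_0\tilde r_0$ from \eqref{coeffDUL}, the value $T_0^1(0)=-\tilde r_0\tilde x_{-1}/\tilde s_0$ and $\tilde x_{-1}+\tilde y_{-1}=1$ yield $\int_{-1}^1\left(T_n^1,T_n^2\right)\Psi_T dx=\frac{\tilde s_0 T_n^1(0)}{\tilde y_0\tilde r_0}\left(-\tilde y_{-1}/\tilde x_{-1},\,1\right)$. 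Multiplying this row on the right by $A^*$ gives $(0,0)$, the last needed orthogonality relation; the negative-index case uses the second line of \eqref{qhatrec3} in the same way.

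Finally, the previous steps show in particular that $\left(T_n^1(x),T_n^2(x)\right)$, $n\in\ZZ$, is orthogonal with respect to $\Psi_T$ for $n\ne m$, which makes the diagonal case $n=m$ mechanical. Using \eqref{qen2} in the form $T_n^\alpha=\tilde s_n^{-1}\big(xQ_n^\alpha-\tilde r_n T_{n-1}^\alpha\big)$, the $T_{n-1}$-term drops by orthogonality and $x\Psi_T=\frac{\tilde s_0}{\tilde y_0}\Psi$ reduces $\int_{-1}^1\left(T_n^1,T_n^2\right)\Psi_T\left(T_n^1,T_n^2\right)^*dx$ to $\frac{\tilde s_0}{\tilde s_n\tilde y_0}\int_{-1}^1\left(Q_n^1,Q_n^2\right)\Psi\left(T_n^1,T_n^2\right)^*dx$; expanding $\left(T_n^1,T_n^2\right)^*$ once more by \eqref{qenqhat2} kills the $Q_{n+1}$-piece by orthogonality of the original polynomials and leaves $\frac{\tilde s_0\tilde y_n}{\tilde s_n\tilde y_0}\cdot\frac1{\pi_n}$. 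A short manipulation with \eqref{coeffDUL}, \eqref{potcoeff} and \eqref{cofpotUL2} identifies this number with $1/\hat\pi_n$; the second row is treated identically and gives $1/\hat\pi_{-n-1}$, which completes the proof.
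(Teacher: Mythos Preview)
Your proposal is correct and follows essentially the same route as the paper's own proof: the same splitting $\bm T_0(x)=A+xB$, the same reduction via $x\Psi_T=\frac{\tilde s_0}{\tilde y_0}\Psi$ for $j\ge1$, the same handling of $j=0$ through \eqref{qhatrec3} and the cancellation forced by \eqref{relq02}, and the same norm computation via \eqref{qen2} and \eqref{qenqhat2}. If anything, your write-up is a touch more explicit at $j=n$ than the paper's.
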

\begin{proof}
The proof follows the same lines as the proof of Theorem \ref{thmorto}. For $n\geq1$ and $j=1,\ldots,n-1$ we have, using \eqref{esptil2}, \eqref{qtilde2}, \eqref{esphat2} and \eqref{qenqhat2}, that
\begin{equation*}
\begin{split}
\int_{-1}^{1} \left(\widehat Q_n^1(x), \widehat Q_n^2(x)\right)& \widehat \Psi  (x)x^jdx= \frac{\tilde y_n \tilde s_0}{\tilde y_0}\left[\int_{-1}^{1} \left(Q_n^1(x), Q_n^2(x)\right) \Psi(x)  A^* x^{j-1}dx+\int_{-1}^{1} \left(Q_n^1(x), Q_n^2(x)\right) \Psi(x)  B^* x^{j}dx\right]\\
&\quad+  \frac{\tilde x_n \tilde s_0}{\tilde y_0}\left[\int_{-1}^{1} \left(Q_{n+1}^1(x), Q_{n+1}^2(x)\right) \Psi(x)  A^* x^{j-1}dx+\int_{-1}^{1} \left(Q_{n+1}^1(x), Q_{n+1}^2(x)\right) \Psi(x)  B^* x^{j}dx\right].
\end{split}
\end{equation*}
where $A$ and $B$ (from $\bm T_0(x)=A+ xB$),  are given now by 
\begin{equation} \label{q0ext2}
A=\begin{pmatrix} -\D\frac{\tilde r_0\tilde x_{-1}}{\tilde s_{0}}& -\D\frac{\tilde r_0\tilde y_{-1}}{\tilde s_{0}}\\ \tilde x_{-1} &\tilde y_{-1} \end{pmatrix},\quad B=\begin{pmatrix} \D\frac{1}{\tilde s_{0}}& 0\\ 0 & 0\end{pmatrix},
\end{equation}
Using \eqref{cond1} we have that the first term of the sum vanishes for $j=1,\ldots,n$, the second term vanishes for $j=0,\ldots,n-1$, the third term vanishes for $j=1,\ldots,n+1$ and the fourth term vanishes for $j=0,\ldots,n$. Therefore the above expression vanishes for $j=1,\ldots,n-1$. 

For $j=0$ we have, using \eqref{qtilde2} and \eqref{esptil2}, that
\begin{equation*}
\begin{split}
\int_{-1}^{1} \left(\widehat Q_n^1(x), \widehat Q_n^2(x)\right) \widehat \Psi  (x)dx&= \int_{-1}^{1} \left(T_n^1(x), T_n^2(x)\right) \bm T_0^{-1}(x) \bm T_0(x) \Psi_T(x)  \bm T_0^*(x)dx \\
&= \left[ \int_{-1}^{1} \left(T_n^1(x), T_n^2(x)\right) \Psi_T(x) dx \right] A^* + \left[ \int_{-1}^{1} \left(T_n^1(x), T_n^2(x)\right) x\Psi_T(x) dx \right] B^*.
\end{split}
\end{equation*}
As before, the second term of the sum of the above expression vanishes as a consequence of \eqref{esphat2}, \eqref{qenqhat2} and \eqref{cond1}. Now, using \eqref{qhatrec3}, we can write
\begin{equation*}
\left(T_n^1(x), T_n^2(x)\right)=x \sum_{j=0}^{n} \frac{1}{\tilde s_j} \left( \frac{T_n^1(0)}{T_{j}^1(0)} Q_j^1(x) , \frac{T_n^2(0)}{T_{j}^2(0)} Q_j^2(x)  \right)+ \left(T_n^1(0),T_n^2(0)\right).
\end{equation*} 
Substituting this in the remaining integral we get
\begin{align*}
\int_{-1}^{1} \left(\widehat Q_n^1(x), \widehat Q_n^2(x)\right)& \widehat \Psi  (x)dx= \left[ \int_{-1}^{1} \left(T_n^1(x), T_n^2(x)\right) \Psi_T(x) dx \right] A^*\\
&= \left(T_n^1(0),T_n^2(0)\right)\left[\frac{\hat{a}_{-1}}{\hat{c}_0}\begin{pmatrix} 1/\tilde x_{-1} &0 \\ 0 & 1/\tilde y_{-1}\end{pmatrix} -\frac{\tilde s_0}{\tilde y_0}M_{-1}\right] A^* \\
 & \hspace{.5cm} +\frac{\tilde s_0}{\tilde y_0} \sum_{j=0}^{n} \frac{1}{\tilde s_j} \int_{-1}^{1} \left( \frac{T_n^1(0)}{T_{j}^1(0)} Q_j^1(x) , \frac{T_n^2(0)}{T_{j}^2(0)} Q_j^2(x)  \right) \Psi (x)A^*dx+\frac{\tilde s_0}{\tilde y_0} \left(T_n^1(0),T_n^2(0)\right) M_{-1} A^*\\
&=\frac{\hat{a}_{-1}}{\hat{c}_0}\left(T_n^1(0),T_n^2(0)\right) \begin{pmatrix} 1/\tilde x_{-1} &0 \\ 0 & 1/\tilde y_{-1}\end{pmatrix} A^*+\frac{1}{\tilde y_0} \frac{T_n^1(0)}{T_{0}^1(0)}\int_{-1}^1\left(Q_0^1(x),Q_0^2(x)\right)\Psi(x)A^*dx.
\end{align*}
The third step is a consequence of $\D\frac{T_n^1(0)}{T_{j}^1(0)}=\frac{T_n^2(0)}{T_{j}^2(0)}$ using \eqref{relq02} and the orthogonality properties. Since $\D\int_{-1}^1\left(Q_0^1(x),Q_0^2(x)\right)\Psi(x)dx=(1,0)$, $T_{0}^1(0)=-\tilde r_0\tilde x_{-1}/\tilde s_0$, $\hat{a}_{-1}/\hat{c}_0=\tilde x_{-1}\tilde s_{0}/\tilde y_0\tilde r_0$ and $A$ is given by \eqref{q0ext2} then we have
\begin{equation*}
\begin{split}
\int_{-1}^{1} \left(\widehat Q_n^1(x), \widehat Q_n^2(x)\right)& \widehat \Psi  (x)dx=\frac{\tilde x_{-1}\tilde s_{0}}{\tilde y_0\tilde r_0}\left(T_n^1(0),T_n^2(0)\right) \begin{pmatrix} 1/\tilde x_{-1} &0 \\ 0 & 1/\tilde y_{-1}\end{pmatrix} A^*-\frac{\tilde s_0}{\tilde y_0\tilde r_0\tilde x_{-1}}\left(T_n^1(0),0\right)A^*\\
&= \left(\frac{\tilde s_0}{\tilde y_0\tilde r_0}(1-1/\tilde x_{-1})T_n^1(0), \frac{\tilde x_{-1}\tilde s_0}{\tilde r_0\tilde y_0\tilde y_{-1}} T_n^2(0) \right) \begin{pmatrix} -\D\frac{\tilde r_0\tilde x_{-1}}{\tilde s_{0}}&\tilde x_{-1}  \\ -\D\frac{\tilde r_0\tilde y_{-1}}{\tilde s_{0}}&\tilde y_{-1} \end{pmatrix}\\
&=\left( \frac{1}{\tilde y_0}(\tilde y_{-1}T_n^1(0)-\tilde x_{-1}T_n^2(0)), -\frac{\tilde s_0}{\tilde y_0\tilde r_0}(\tilde y_{-1}T_n^1(0)-\tilde x_{-1}T_n^2(0)) \right)=(0,0),
\end{split}
\end{equation*}
as a consequence of \eqref{relq02}. For $n\leq-1$ the proof is similar but now using \eqref{cond2} and \eqref{qhatrec3}. Finally, using \eqref{qtilde2}, \eqref{qen2}, \eqref{qenqhat2}, \eqref{esptil2} and \eqref{esphat2}, we have that
\begin{equation*}
\begin{split}
\int_{-1}^{1} &\left(\widehat Q_n^1(x), \widehat Q_n^2(x)\right) \widehat \Psi(x)   \left(\widehat Q_n^1(x) , \widehat Q_n^2(x)\right)^*dx= \int_{-1}^{1}   \left(T_n^1(x),T_n^2(x)\right) \Psi_T(x)  \left(T_n^1(x), T_n^2(x)\right) ^* dx\\
&= \int_{-1}^{1}   \left[ \frac{x}{\tilde s_n}\left( Q_n^1(x), Q_n^2(x)\right)-\frac{\tilde r_n}{ \tilde s_n} \left(T_{n-1}^1(x), T_{n-1}^2(x)\right) \right] \Psi_T(x)  \left(T_n^1(x), T_n^2(x)\right)^*dx\\
&= \frac{1}{\tilde s_n}\int_{-1}^{1}   x \left( Q_n^1(x), Q_n^2(x)\right) \Psi_T(x)  \left(T_n^1(x),T_n^2(x)\right)^*dx\\
&= \frac{\tilde s_0}{\tilde s_n \tilde y_0}\int_{-1}^{1}    \left( Q_n^1(x) , Q_n^2(x)\right)\Psi(x)\left[  \tilde y_n\left( Q_n^1(x) ,  Q_n^2(x)\right)^* + \tilde x_n\left( Q_{n+1}^1(x) ,  Q_{n+1}^2(x)\right)^* \right] \\
&=  \frac{\tilde y_n\tilde s_0}{\tilde s_n \tilde y_0}\int_{-1}^{1}    \left( Q_n^1(x) , Q_n^2(x)\right) \Psi(x) \left( Q_n^1(x) ,  Q_n^2(x)\right)^*dx=\frac{\tilde y_n\tilde s_0}{\tilde s_n \tilde y_0}\frac {1}{\pi _n}= \frac{1}{\hat \pi_n}.
\end{split}
\end{equation*}
The last step follows using \eqref{ULd}, \eqref{coeffDUL} and the definition of $\pi_n$ and $\hat \pi_n$ in \eqref{potcoeff} and  \eqref{cofpotUL2}, respectively.

\end{proof}

\begin{remark}
The spectral matrix $\widehat\Psi$ associated with the Darboux transformation given in the previous theorem is \emph{again} a conjugation by a matrix polynomial of degree 1 (namely $\bm T_0(x)$) of a \emph{Geronimus transformation} of the original spectral matrix $\Psi$. This phenomenon is different from a Darboux transformation of a transition probability matrix on $\ZZ_{\geq0}$ for the LU factorization (see \cite{GdI3}), where the associated spectral measure is given by a \emph{Christoffel transformation}, i.e. multiplying the original measure by the polynomial $x$.
\end{remark}

\section{Examples}\label{sec4}

\subsection{Random walk on $\ZZ$ with constant transition probabilities} 

Consider $P$ as in \eqref{QZ} with coefficients
$$
a_n=a,\quad b_n=b,\quad c_n=c,\quad n\in\mathbb{Z},\quad a+b+c=1,\quad a,c>0, b\geq0.
$$
For the UL factorization, the continued fractions \eqref{cfUL} can be computed explicitly. Indeed, using Proposition 4.1 of \cite{GdI3} we have that
\begin{equation*}\label{FFF1}
H=\frac{1}{2}\left(1+c-a+\sqrt{(1+c-a)^2-4c}\right),
\end{equation*}
as long as $a\leq(1-\sqrt{c})^2$ (to ensure convergence). On the other hand, we have $H'=c/H$. Therefore we have
\begin{equation*}\label{FFF2}
H'=\frac{1}{2}\left(1+c-a-\sqrt{(1+c-a)^2-4c}\right).
\end{equation*}
It is also possible to see, under the conditions on the parameters, that $0\leq H'\leq H\leq1$. Therefore, according to Theorem \ref{thmfracUL}, as long as $H'\leq y_0\leq H$ we always have a stochastic UL factorization where both factors are stochastic matrices. All formulas simplify considerably when $y_0=H$ (or $y_0=H'$). Indeed, in this case we have
\begin{equation}\label{coeff}
\begin{split}
y_n&=H,\quad x_n=1-H,\\
s_n&=1-\frac{c}{H},\quad r_n=\frac{c}{H},
\end{split}
\end{equation}
while if $y_0=H'$ we have the same formulas but replacing $H'$ by $H$. It is also remarkable that in these cases the coefficients of the Darboux transformation \eqref{coeffDLU} \emph{remain invariant}, i.e. the random walk $\widetilde P$ is exactly the same as the original random walk $P$. This phenomenon is not possible for Darboux transformations for random walks on $\ZZ_{\geq0}$.

As for the LU factorization, following Theorem \ref{thmfracLU}, we have a stochastic LU factorization if and only if we choose the free parameter $\tilde r_0$ in the range $H'\leq \tilde r_0\leq H$. In this case we also have that if we choose $\tilde r_0$ either $H$ or $H'$ then the coefficients of the Darboux transformation \eqref{coeffDUL} \emph{remain invariant}, i.e. the random walk $\widehat P$ is exactly the same as the original random walk $P$.

\medskip

The spectral matrix associated with this example appeared for the first time in the last section of \cite{KMc6} (for the case of $b=0$, i.e. the symmetric random walk) along with a method to compute the spectral matrix using Stieltjes transforms and the spectral measures associated with the positive and negative states of the original random walk. A combination of this method and Proposition 4.2 of \cite{GdI3} shows that the spectral matrix of the original random walk is given by only an absolutely continuous part, i.e.
\begin{equation}\label{WW1}
\Psi(x)=\frac{1}{\pi\sqrt{(x-\sigma_-)(\sigma_+-x)}}\begin{pmatrix} 1 & \D\frac{x-b}{2c}\\\D\frac{x-b}{2c}& a/c\end{pmatrix},\quad x\in[\sigma_-,\sigma_+],\quad \sigma_{\pm}=1-\left(\sqrt{a}\mp\sqrt{c}\right)^2.
\end{equation}
Therefore we get the Karlin-McGregor formula \eqref{KmcG1} for the $n$-step transition probabilities of the random walk  $P$. Also, using  Remark \ref{remrec}, we have that the random walk is always transient except for the case $a=c$. The random walk is never positive recurrent since the spectral matrix \eqref{WW1} does not have a jump at the point 1, so for the case $a=c$ the random walk is null recurrent.

A straightforward computation shows that the moment $M_{-1}$ of $\Psi$ is given by
\begin{equation}\label{Mm11}
M_{-1}=\begin{pmatrix} \D\frac{1}{\sqrt{\sigma_-\sigma_+}} &\D\frac{1}{2c}\left(1-\D\frac{b}{\sqrt{\sigma_-\sigma_+}}\right)\\\D\frac{1}{2c}\left(1-\D\frac{b}{\sqrt{\sigma_-\sigma_+}}\right)& \D\frac{a}{c\sqrt{\sigma_-\sigma_+}}\end{pmatrix}.
\end{equation}
In order for $M_{-1}$ to be well-defined we need to assume that $\sigma_->0$, i.e. $\sqrt{a}+\sqrt{c}<1$, or, in other words $a<(1-\sqrt{c})^2$, which is the condition for convergence of the continued fractions $H$ and $H'$. With this information we can compute the spectral matrices associated with the Darboux transformation $\widetilde P$ in \eqref{QZULdar} (for the UL factorization) and $\widehat P$ in \eqref{QZLUdar} (for the LU factorization), both depending on one free parameter.

For the UL case we have, using Theorem \ref{thmorto}, that
$$
\widetilde \Psi(x)=\bm S_0(x) \Psi_S (x) \bm S_0^*(x),
$$
where
\begin{equation*}
\bm S_0(x)=\begin{pmatrix} s_0& r_0 \\ -\D\frac{x_{-1}s_0}{y_{-1}}& \D\frac{x-x_{-1}r_0}{y_{-1}}
\end{pmatrix}=\begin{pmatrix} \D\frac{y_0-c}{y_0}& \D\frac{c}{y_0} \\ -\D\frac{a(y_0-c)}{y_0(1-a)-c}& \D\frac{x(y_0-c)-ac}{y_0(1-a)-c}
\end{pmatrix},
\end{equation*}
and
\begin{equation*}
\Psi_S(x)=\frac{y_0}{y_0-c}\left(y_0\frac{\Psi(x)}{x}+ \left[ \begin{pmatrix}1&0 \\ 0 &\D\frac{y_0-c}{c} \end{pmatrix}-y_0M_{-1}\right] \delta_0(x)\right),
\end{equation*}
where $\Psi$ and $M_{-1}$ are defined by \eqref{WW1} and \eqref{Mm11}, respectively. Observe that the only free parameter is $y_0$. A straightforward computation gives that
\begin{equation*}
\widetilde\Psi(x)=\frac{1}{\pi x\sqrt{(x-\sigma_-)(\sigma_+-x)}}\left[\widetilde A+\widetilde Bx+\widetilde Cx^2\right]+\widetilde{\bm M}\delta_0(x),
\end{equation*}
where
\begin{align*}
\widetilde A&=\frac{(H'-y_0)(H-y_0)}{s_0y_0}\begin{pmatrix}1&-x_{-1}/y_{-1} \\ -x_{-1}/y_{-1}&(x_{-1}/y_{-1})^2 \end{pmatrix},\\
\widetilde B&=\begin{pmatrix}1&-\D\frac{by_0}{2cy_{-1}} \\ -\D\frac{by_0}{2cy_{-1}} &\D\frac{(y_0b-c(1-c))x_{-1}^2}{acy_{-1}^2} \end{pmatrix},\quad \widetilde C=\frac{y_0}{2cy_{-1}}\begin{pmatrix}0&1\\1&0\end{pmatrix},\\
\widetilde{\bm M}&=\frac{(y_0-H')(H-y_0)}{s_0y_0\sqrt{\sigma_-\sigma_+}}\begin{pmatrix}1&-x_{-1}/y_{-1} \\ -x_{-1}/y_{-1}&(x_{-1}/y_{-1})^2 \end{pmatrix}.
\end{align*}
From here we clearly see that if we choose $y_0$ in the range $H'\leq y_0\leq H$, then $\widetilde{\bm M}$ is a positive semidefinite matrix, so $\widetilde\Psi$ is a proper \emph{weight matrix}. Another interesting case, as we mentioned earlier, is when we choose either $y_0=H$ or $y_0=H'$. In these cases we have $\widetilde A=0_{2\times2}, \widetilde{\bm M}=0_{2\times2}$ and following \eqref{coeff} we get
$$
\widetilde B=\begin{pmatrix}1&-\D\frac{b}{2c} \\ -\D\frac{b}{2c}&a/c\end{pmatrix},\quad \widetilde C=\frac{1}{2c}\begin{pmatrix}0&1\\1&0\end{pmatrix}.
$$
Therefore we recover the original weight matrix \eqref{WW1}, as we predicted before. From the spectral matrix $\widetilde\Psi$ we get the Karlin-McGregor formula \eqref{KmcG1} for the $n$-step transition probabilities of the random walk $\widetilde P$. The recurrence of the Darboux random walk is not affected by the transformation.

For the LU case we have, using Theorem \ref{thmorto2} and after some computations, that
\begin{equation*}
\widehat\Psi(x)=\frac{1}{\pi x\sqrt{(x-\sigma_-)(\sigma_+-x)}}\left[\widehat A+\widehat Bx+\widehat Cx^2\right]+\widehat{\bm M}\delta_0(x),
\end{equation*}
where
\begin{align*}
\widehat A&=\frac{\tilde r_0(H'-\tilde r_0)(H-\tilde r_0)}{\tilde x_{-1}\tilde s_0^2}\begin{pmatrix}1&-\tilde s_{0}/\tilde r_{0} \\ -\tilde s_{0}/\tilde r_{0}&(\tilde s_{0}/\tilde r_{0})^2 \end{pmatrix},\\
\widehat B&=\frac{\tilde r_0\tilde y_0}{\tilde x_{-1}\tilde s_0}\begin{pmatrix}1&-\D\frac{b}{2\tilde y_{0}\tilde r_0} \\ -\D\frac{b}{2\tilde y_{0}\tilde r_0} &\D\frac{\tilde s_0 \tilde x_{-1}}{\tilde y_0\tilde r_0} \end{pmatrix},\quad \widehat C=\frac{1}{2\tilde s_0\tilde x_{-1}}\begin{pmatrix}0&1\\1&0\end{pmatrix},\\
\widehat{\bm M}&=\frac{\tilde r_0(\tilde r_0-H')(H-\tilde r_0)}{\tilde s_0^2\tilde x_{-1}\sqrt{\sigma_-\sigma_+}}\begin{pmatrix}1&-\tilde s_{0}/\tilde r_{0} \\ -\tilde s_{0}/\tilde r_{0}&(\tilde s_{0}/\tilde r_{0})^2 \end{pmatrix}.
\end{align*}
Again we clearly see that if we choose $\tilde r_0$ in the range $H'\leq \tilde r_0\leq H$, then $\widehat{\bm M}$ is a positive semidefinite matrix, so $\widehat\Psi$ is a proper \emph{weight matrix}. If we choose either $\tilde r_0=H$ or $\tilde r_0=H'$, then we recover the original weight matrix \eqref{WW1}, as we predicted before. Finally, from the spectral matrix $\widehat\Psi$ we get the Karlin-McGregor formula \eqref{KmcG1} for the $n$-step transition probabilities of the random walk $\widehat P$ and the recurrence of the Darboux random walk is not affected by the transformation.

\subsection{Random walk on $\ZZ$ with constant transition probabilities and an attractive or repulsive force}

Consider $P$ as in \eqref{QZ} with
$$
a_n=a,\quad c_{n}=c,\quad n\geq0,\quad a_{-n}=c,\quad c_{-n}=a,\quad n\geq1,\quad  b_n=b,\quad n\in\mathbb{Z},
$$
and, as before, $a+b+c=1, a,c>0, b\geq0$. Observe that the probabilities $a$ and $c$ are interchanged for nonnegative and negative states of the random walk. Therefore if $a<c$ we have a random walk where the origin is an attractive state. On the contrary, if $a>c$, then the origin is a repulsive state.

Again, the continued fractions \eqref{cfUL} can be computed explicitly. $H$ is the same as before, i.e.
\begin{equation}\label{FFF1}
H=\frac{1}{2}\left(1+c-a+\sqrt{(1+c-a)^2-4c}\right),
\end{equation}
as long as $a\leq(1-\sqrt{c})^2$ (to ensure convergence). On the other hand, we have
$$
H'=\frac{c}{1-\D\frac{c}{H}}.
$$
Rationalizing we get
\begin{equation}\label{FFF2}
H'=\frac{c}{2a}\left(1+a-c-\sqrt{(1+c-a)^2-4c}\right).
\end{equation}
It is easy to see that $H'>0$ if and only if $a>0$. On the other hand, now it is not true that $H'\leq H$ for all values of the parameters $a$ and $c$ such that $a\leq(1-\sqrt{c})^2$. In fact, as $a$ gets closer to 0, we have that there are some values of $c$ such that $H'>H$. A closer look to the inequality $H'\leq H$ shows that $a$ must be in the following range
\begin{equation}\label{rangg}
\begin{cases}
0<a\leq(1-\sqrt{c})^2,&\mbox{if}\quad 0<c\leq1/4,\\
0<a\leq\D\frac{1-2c}{2},&\mbox{if}\quad 1/4\leq c<1.
\end{cases}
\end{equation}
Now, if we choose $y_0=H$, we observe that the positive states of the original random walk remain invariant under the Darboux transformation \eqref{coeffDLU}, while if $y_0=H'$, then the negative states of the original random walk remain invariant, but the rest of coefficients are difficult to compute.

As for the LU factorization, following Theorem \ref{thmfracLU}, we have a stochastic LU factorization if and only if we choose the free parameter $\tilde r_0$ in the range $H'\leq \tilde r_0\leq H$. As before, in order for $H'\leq H$, $a$ must be in the range \eqref{rangg}. Similar behavior holds if we assume either $\tilde r_0=H$ or $\tilde r_0=H'$ for the random walk 
 $\widehat P$.

\medskip

The spectral matrix associated with this example appeared for the first time in Section 6 of \cite{G1} (for the case of $b=0$, i.e. the symmetric random walk). As before, we can compute the spectral matrix for the original random walk $P$. Now in this case the spectral matrix is given by an absolutely continuous and a discrete part, which we write as $\Psi(x)=\Psi_c(x)+\Psi_d(x)$. The absolutely continuous part is given by
$$
\Psi_c(x)=\frac{(a+c)\sqrt{(x-\sigma_-)(\sigma_+-x)}}{2\pi c(1-x)(x-2b+1)}\begin{pmatrix} 1 & \D\frac{x-b}{a+c}\\\D\frac{x-b}{a+c}& 1\end{pmatrix},\quad x\in[\sigma_-,\sigma_+],
$$
where $\sigma_{\pm}$ are defined in \eqref{WW1}. The discrete part is given by
$$
\Psi_d(x)=\frac{c-a}{2c}\left[\begin{pmatrix}1&-1\\-1&1\end{pmatrix}\delta_{2b-1}(x)+\begin{pmatrix}1&1\\1&1\end{pmatrix}\delta_{1}(x)\right]\chi_{\{c>a\}},
$$
where $\chi_{A}$ is the indicator function. Therefore we get the Karlin-McGregor formula \eqref{KmcG1} for the $n$-step transition probabilities of the random walk  $P$. Also, using  Remark \ref{remrec}, the random walk is always recurrent and positive recurrent if $c>a$ since for that case the spectral matrix has a jump at the point 1.

Now the computation of the moment $M_{-1}$ of $\Psi$ is more complicated since we have an absolutely continuous and a discrete part. Also one of the Dirac deltas of the discrete part can be located at $x=0$ if $b=1/2$, so that $M_{-1}$ may have a different expression in that case. Nevertheless, after some computations, we obtain
\begin{equation*}\label{Mm12}
M_{-1}=\begin{pmatrix} \mu_{-1}&\D\frac{\gamma-b\mu_{-1}}{a+c}\\\D\frac{\gamma-b\mu_{-1}}{a+c}& \mu_{-1}\end{pmatrix}+\frac{c-a}{c(2b-1)}\begin{pmatrix}b&-(a+c)\\-(a+c)&b\end{pmatrix}\chi_{\{c>a\}},
\end{equation*}
where
$$
\mu_{-1}=\frac{1}{2c(2b-1)}\left((a+c)\sqrt{\sigma_-\sigma_+}-b|a-c|\right),\quad \gamma=\begin{cases} 1,&\mbox{if}\quad c\leq a,\\a/c,&\mbox{if}\quad c> a.\end{cases}
$$
In order for $M_{-1}$ to be well-defined we need to assume that $\sigma_->0$, i.e. $\sqrt{a}+\sqrt{c}<1$, or, in other words $a<(1-\sqrt{c})^2$, which is the condition for convergence of the continued fractions $H$ and $H'$. For the case of $b=1/2$ ($c=1/2-a$) we obtain
\begin{equation}\label{Mm122}
M_{-1}=\begin{pmatrix} \D\frac{4a}{|4a-1|}& 2\left(\gamma-\D\frac{2a}{|4a-1|}\right)\\2\left(\gamma-\D\frac{2a}{|4a-1|}\right)&\D\frac{4a}{|4a-1|}\end{pmatrix}+\frac{1-4a}{1-2a}\begin{pmatrix}1&1\\1&1\end{pmatrix}\chi_{\{a<1/4\}}.
\end{equation}

For the UL case we have, using Theorem \ref{thmorto} and after some computations, that
\begin{equation*}
\widetilde\Psi(x)=\frac{\sqrt{(x-\sigma_-)(\sigma_+-x)}}{2\pi cx(1-x)(x-2b+1)}\left[\widetilde A+\widetilde Bx+\widetilde Cx^2\right]+\widetilde{\bm M}_0\delta_0(x)+\widetilde{\bm M}_{2b-1}\delta_{2b-1}(x)+\widetilde{\bm M}_1\delta_1(x),
\end{equation*}
where
\begin{align*}
\widetilde A&=\frac{(a+c)(\alpha_+-y_0)(\alpha_--y_0)}{s_0y_0}\begin{pmatrix}1&-x_{-1}/y_{-1} \\ -x_{-1}/y_{-1}&(x_{-1}/y_{-1})^2 \end{pmatrix},\quad \alpha_{\pm}=\frac{c}{a+c}\left(1\pm\sqrt{1-2a-2c}\right),\\
\widetilde B&=\begin{pmatrix}2c&\D\frac{(c(1-2c)-by_0)x_{-1}}{cy_{-1}} \\ \D\frac{(c(1-2c)-by_0)x_{-1}}{cy_{-1}} &\D\frac{2(by_0-c(1-c))x_{-1}^2}{cy_{-1}^2}\end{pmatrix},\quad \widetilde C=\frac{y_0s_0x_{-1}}{cy_{-1}}\begin{pmatrix}0&1\\1&\D\frac{(a-c)x_{-1}}{cy_{-1}}
\end{pmatrix},\\
\widetilde{\bm M}_0&=\frac{a(\bar{H'}-\bar{H})(y_0-H')(H-y_0)}{c(2b-1)s_0y_0}\begin{pmatrix}1&-x_{-1}/y_{-1} \\ -x_{-1}/y_{-1}&(x_{-1}/y_{-1})^2 \end{pmatrix},\\
\widetilde{\bm M}_{2b-1}&=\frac{c-a}{2c(2b-1)}\begin{pmatrix}(s_0-r_0)^2&-(s_0-r_0)(s_{-1}-r_{-1})\\ -(s_0-r_0)(s_{-1}-r_{-1})&(s_{-1}-r_{-1})^2 \end{pmatrix}\chi_{\{c>a\}},\quad \widetilde{\bm M}_1=\frac{c-a}{2c}\begin{pmatrix}1&1\\1&1\end{pmatrix}\chi_{\{c>a\}}.
\end{align*}
Here $\bar{H}$ and $\bar{H'}$ in $\widetilde{\bm M}_0$ are the radical conjugates of $H$ and $H'$, respectively, and we are implicitly assuming that $b\neq 1/2$. If $b=1/2$ then the Geronimus transformation is not well-defined for the Dirac delta at $x=0$. However, it is possible to define the spectral matrix in terms of the \emph{derivative} of the Dirac delta at $x=0$. Indeed, the spectral matrix for the Darboux transformation is given in this case by 
\begin{equation}\label{spmat12}
\widetilde\Psi(x)=\frac{\sqrt{(x-\sigma_-)(\sigma_+-x)}}{2\pi cx^2(1-x)}\left[\widetilde A+\widetilde Bx+\widetilde Cx^2\right]+\widetilde{\bm M}_0\delta_0(x)-\widetilde{\bm M}_{0}'\delta_{0}'(x)+\widetilde{\bm M}_1\delta_1(x),
\end{equation}
where $\widetilde A,\widetilde B,\widetilde C$ and $\widetilde{\bm M}_1$ are the same as before writing $b=1/2$ and $c=1/2-a$ and
\begin{align*}
\widetilde{\bm M}_0'&=\lim_{b\to1/2}(2b-1)\widetilde{\bm M}_{2b-1},\\
\widetilde{\bm M}_0&=\eta\frac{(y_0-H')(H-y_0)}{s_0y_0}\begin{pmatrix}1&-x_{-1}/y_{-1} \\ -x_{-1}/y_{-1}&(x_{-1}/y_{-1})^2 \end{pmatrix},\quad\eta=\begin{cases} \D\frac{1}{2(1-2a)(1-4a)},&\mbox{if}\quad a<1/4,\\\D\frac{a}{4a-1},&\mbox{if}\quad a>1/4.\end{cases}
\end{align*}
If $a=1/4$ then the moment \eqref{Mm122} is not well-defined. Observe that in this case we are in the situation of the previous example. From the spectral matrix $\widetilde\Psi$ we get the Karlin-McGregor formula \eqref{KmcG1} for the $n$-step transition probabilities of the random walk $\widetilde P$. The recurrence of the Darboux random walk is not affected by the transformation.

\medskip

For the LU case we have, using Theorem \ref{thmorto2} and after some computations, that

\begin{equation*}
\widehat \Psi(x)=\frac{ \sqrt{(x-\sigma_-)(\sigma_+-x)}}{2\pi cx(1-x)(x-2b+1)}\left[\widehat A+\widehat Bx+\widehat Cx^2\right]+\widehat {\bm M}_0\delta_0(x)+\widehat {\bm M}_{2b-1}\delta_{2b-1}(x)+\widehat {\bm M}_1\delta_1(x),
\end{equation*}
where
\begin{align*}
\widehat A&=\frac{(a+c)(\beta_+-\tilde s_0)(\beta_--\tilde s_0)}{\tilde s_0\tilde y_0}\begin{pmatrix}1&-\tilde s_0/\tilde r_0 \\ -\tilde s_0/ \tilde r_0 &(\tilde s_0/\tilde r_0)^2 \end{pmatrix},\quad \beta_{\pm}=\frac{a+c\sqrt{2b-1}}{a+c},\\
\widehat B&=\frac{1}{\tilde y_0\tilde r_0}\begin{pmatrix} \D\frac{2\tilde r_0}{\tilde s_0}(c(1-c)-p\tilde r_0) & (a-c)\tilde r_0-c(1-2c) \\ (a-c)\tilde r_0-c(1-2c)  & 2c\tilde s_0\tilde x_{-1}\end{pmatrix},\quad \widehat C=\frac{\tilde y_{-1}}{\tilde y_0}\begin{pmatrix} \D\frac{a-c}{\tilde y_{-1}} &1\\1&0
\end{pmatrix},\\
\widetilde{\bm M}_0&=\frac{a(\bar{H'}-\bar{H})(\tilde r_0-H')(H-\tilde r_0)}{c(2b-1)\tilde s_0\tilde y_0}\begin{pmatrix}1&-\tilde s_{0}/\tilde r_0 \\ -\tilde s_{0}/\tilde r_0&(\tilde s_{0}/\tilde r_0)^2 \end{pmatrix},\quad \widehat{\bm M}_1=\frac{c-a}{2c}\begin{pmatrix}1&1\\1&1\end{pmatrix}\chi_{\{c>a\}},\\
\widehat {\bm M}_{2b-1}&=\frac{c-a}{2c(2b-1)}\begin{pmatrix}(\tilde x_0-\tilde y_0)^2&-(\tilde x_0-\tilde y_0)(\tilde x_{-1}-\tilde y_{-1})\\ -(\tilde x_0-\tilde y_0)(\tilde x_{-1}-\tilde y_{-1})&(\tilde x_{-1}-\tilde y_{-1})^2 \end{pmatrix}\chi_{\{c>a\}}.
\end{align*}
Similar results hold for the case $b=1/2$. From the spectral matrix $\widehat\Psi$ we get the Karlin-McGregor formula \eqref{KmcG1} for the $n$-step transition probabilities of the random walk $\widehat P$ and the recurrence of the Darboux random walk is not affected by the transformation.

Observe that if we assume $a=c$ then we recover the previous example and the Darboux transformation is invariant if we choose the free parameter $y_0=H=\frac{1}{2}(1+\sqrt{1-4a})$ or $y_0=H'=\frac{1}{2}(1-\sqrt{1-4a})$ (same for the LU factorization). We  have not found any other choice of the free parameter $y_0$ (or $\tilde r_0$) such that the Darboux transformation is invariant. Nevertheless there are some values of the parameters where we can guarantee that the Darboux transformation is \emph{almost invariant}. Indeed, for $0<a<1/2$ consider $c=1/2-a$. With this choice we always have $b=1/2$. The values of the continued fractions \eqref{FFF1} and \eqref{FFF2} depend on the value of $a$. We have two situations:
\begin{itemize}
\item If $0<a\leq1/4$, then $H=H'=1-2a$. Therefore the only choice of the parameter $y_0$ in order to have a stochastic factorization is $y_0=1-2a$. For this value we always have
\begin{equation}\label{coeff2}
\begin{split}
y_n&=1-2a, \quad n\geq0,\quad y_{-n}=2a, \quad n\geq1,\\
s_n&=r_n=1/2,\quad x_n=1-y_n,\quad n\in\ZZ.
\end{split}
\end{equation}
Therefore the transition probabilities of the Darboux transformation are exactly the same as the original case except for the state 0, where we have
\begin{equation}\label{newc}
\tilde c_0=a,\quad \tilde a_0=a,\quad \tilde b_0=1-2a.
\end{equation}
The random walk generated by $\widetilde P$ is almost the same as the original one except for the state 0. The spectral matrix is given in this case by
\begin{equation}\label{specs}
\widetilde\Psi(x)=\frac{\sqrt{(x-\sigma_-)(\sigma_+-x)}}{2\pi cx(1-x)}\left[\widetilde B+\widetilde Cx\right]+\widetilde{\bm M}_1\delta_1(x),\quad \sigma_{\pm}=1/2\pm\sqrt{2a(1-2a)},
\end{equation}
where
\begin{align*}
\widetilde B&=(1-2a)\begin{pmatrix}1&-\D\frac{1-2a}{2a} \\ -\D\frac{1-2a}{2a}&\D\frac{(1-2a)^2}{4a^2}\end{pmatrix},\quad \widetilde C=\frac{1-2a}{2a}\begin{pmatrix}0&1\\1&-\D\frac{1-4a}{2a}
\end{pmatrix},\quad \widetilde{\bm M}_1=\frac{1-4a}{2(1-2a)}\begin{pmatrix}1&1\\1&1\end{pmatrix}.
\end{align*}
Similar results hold for the LU case.
\item If $1/4<a<1/2$, then $H=1/2$ and $H'=(1-2a)/4a$. Therefore the parameter $y_0$ can be chosen in the range 
$$
(1-2a)/4a\leq y_0\leq 1/2.
$$
If we take $y_0=1-2a$ (which satisfies the previous bounds) then we are in the same situation of the previous case, i.e. we have \eqref{coeff2} for the sequences $x_n,y_n,s_n,r_n$ and the transition probabilities of the Darboux transformation are exactly the same as the original case except for the state 0, where we have again \eqref{newc}. The spectral matrix is then given by \eqref{specs}. If $y_0=1/2$ then we get invariance on the positive states of the Darboux random walk but not on the negative states nor state 0. On the contrary, if $y_0=(1-2a)/4a$, then we get invariance on the negative states of the Darboux random walk but not on the nonnegative states. The spectral matrix can be computed from \eqref{spmat12}. Similar results hold for the LU case.
\end{itemize}

\end{document}